\DeclareMathOperator*{\Tr}{\operatorname{Tr}}
\newcommand{\norm}[1]{\| #1 \|}
\newtheorem{theorem}{Theorem}
\newtheorem{lemma}{Lemma}
\newtheorem{remark}{Remark}
\newtheorem{corollary}{Corollary}
\newtheorem{ansatz}{Assumption}
\newcommand{\oprocendsymbol}{\hbox{$\bullet$}}
\newcommand{\oprocend}{\relax\ifmmode\else\unskip\hfill\fi\oprocendsymbol}
\newcommand{\red}[1]{{\color{red} #1}}
\renewcommand{\red}[1]{{#1}}
\newcommand{\blue}[1]{{\color{black} #1}}
\definecolor{dark-green}{rgb}{0,.4,0}
\newcommand{\green}[1]{{\color{black} #1}}
\newcommand{\floor}[1]{\lfloor #1 \rfloor}
\newcommand{\ceil}[1]{\lceil #1 \rceil}
\newcommand{\modulo}{\text{mod}}
\newcommand{\real}{\mathbb{R}}
\newcommand{\integers}{\mathbb{Z}}
\newcommand{\integerspos}{\mathbb{N}}
\title{The value of timing information\\
	in  event-triggered control}
\author{Mohammad Javad Khojasteh
\; Pavankumar Tallapragada \; Jorge
  Cort{\'e}s \; Massimo Franceschetti
  \thanks{Preliminary results of this paper appeared in the Proceedings of the Allerton Conference on
    Communications, Control, and Computing~\cite{MJK-PT-JC-MF:16-allerton} and the IEEE Conference
    on Decision and Control~\cite{khojasteh2017time}. \newline
    M. J. Khojasteh and M. Franceschetti are with the Department of
    Electrical and Computer Engineering of University of California,
    San Diego. P.  Tallapragada is with the Department of Electrical
    Engineering, Indian Institute of Science, Bengaluru,
    India. J.~Cort{\'e}s is with the Department of Mechanical and
    Aerospace Engineering, University of California, San Diego.
    \{mkhojasteh,massimo,cortes\}@ucsd.edu, pavant@iisc.ac.in}}
\begin{document}


\maketitle

\begin{abstract}
  We study event-triggered control for stabilization of unstable
  linear plants over rate-limited communication channels subject to
  unknown, bounded delay. On one hand, the timing of event triggering
  carries implicit information about the state of the plant. On the
  other hand, the delay in the communication channel causes
  information loss, as it makes the state information available at the
  controller out of date.  Combining these two effects, we show a
  \emph{phase transition} behavior in the transmission rate required
  for stabilization using a given event-triggering strategy.  For
  small values of the delay, the timing information carried by the
  triggering events is substantial, and the system can be stabilized
  with any positive rate. When the delay exceeds a critical threshold,
  the timing information alone is not enough to achieve stabilization,
  and the required rate grows. 
  When the delay equals the inverse of
  the \emph{entropy rate} of the plant, the implicit information
carried by the triggering events perfectly compensates the loss of
information due to the communication delay,
 and we recover the  rate
  requirement prescribed by the \emph{data-rate theorem}. 
  We also provide
  an explicit construction yielding a sufficient rate for
  stabilization, as well as results for vector systems. Our
  results do not rely on any a priori probabilistic model for the delay
  or the initial conditions.
\end{abstract}

\begin{IEEEkeywords}
  Data-rate theorem, event-triggered control, control under
  communication constraints, quantized
  control.
\end{IEEEkeywords}

\section{Introduction}\label{sec:intro}
Cyber-physical systems (CPS) are engineering systems that integrate
computing, communication, and control. They arise in a wide range of
areas such as robotics, energy, civil infrastructure, manufacturing,
and transportation~\cite{kumar,murray2003future}. Due to
the need for tight integration of different components, requirements
and time scales, the modeling, analysis, and design of CPS present new challenges.  One key aspect  
is the presence of finite-rate, digital communication channels
in the feedback loop.  Data-rate theorems quantify the effect that
communication has on stabilization by stating that the communication
rate available in the feedback loop should be at least as large as the
\emph{intrinsic entropy rate} of the system (corresponding to the sum
of the logarithms of the unstable modes). In this way, the controller
can compensate for the expansion of the state occurring during the
communication process. Early formulations of data-rate theorems
appeared in~\cite{Delchamps,wong1999systems,baillieul1999feedback},
followed by the key contributions in
\cite{Mitter,nair2004stabilizability}. More recent extensions include
time-varying rate, Markovian, erasure, additive white and
colored Gaussian, and multiplicative noise feedback communication
channels~\cite{martins2006feedback,Paolo,Lorenzo,sukhavasi2016linear,middleton2009feedback,ardestanizadeh2012control,ding2016multiplicative},
formulations for nonlinear
systems~\cite{de2005n,liberzon2009nonlinear, topological}, 
\red{for optimal control~\cite{tatikonda2004stochastic,kostina2016ratemm,toli}},
for systems
with random
parameters~\cite{ling2010necessary,ranade2015control,nair2002communication},
and for switching
systems~\cite{liberzon2014finite,yang2016finite}. Connections
with information theory are highlighted
in~\cite{topological,sahai2006necessity,
  matveev2009estimation,minero2017anytime,girish2013}. Extended
surveys of the literature appear in~\cite{Massimo,Nair} and in the
book \cite{Yukselbook}.
 
Another key aspect of CPS  to which we pay special attention
here is the need to efficiently use the available
resources. Event-triggering control
techniques~\cite{astrom2002comparison,Tabuada,WPMHH-KHJ-PT:12} have
emerged as a way of trading computation and decision-making for other
services, such as communication, sensing, and actuation. In the
context of communication, event-triggered control seeks to prescribe
information exchange between the controller and the plant in an
opportunistic manner. In this way, communication occurs only when
needed for the task at hand (e.g., stabilization, tracking), and the
primary focus is on minimizing the number of transmissions while
guaranteeing the control objectives and the feasibility of the
resulting real-time implementation. While the
majority of this literature relies on the assumption of continuous
availability and infinite precision of the communication
channel, 
recent works also explore event-triggered implementations in the presence of
  data-rate
constraints~\cite{PT-JC:16-tac,Level,ling2016bit,pearson2017control,li2012stabilizing2,li2012stabilizingF},
and packet
drops~\cite{quevedo2014stochastic,demirel2013trade,tallapragada2016event}.
In this context, one important observation raised in~\cite{Level} is that using
event-triggering it is possible to ``beat'' the data-rate theorem.
Namely, if the channel does not introduce any delay and the controller
knows the triggering mechanism, then an event-triggering strategy can
achieve stabilization for any positive rate of transmission. This
apparent contradiction can be explained by noting that the timing of the
triggering events carries information, revealing the state of the
system. When communication occurs without delay, the controller can
track the state with arbitrary precision, and transmitting a single \blue{data payload}
bit at every triggering event is enough to compute the appropriate
control action. The works~\cite{Level,ling2016bit} take advantage of
this observation to show that any positive rate of transmission is
sufficient for stabilization when the delay is sufficiently small.
\red{In contrast, the work in~\cite{PT-JC:16-tac} studies the problem of stabilization   using an event-triggered strategy, but it does not exploit the implicit timing information carried by  the triggering events. The recent work in~\cite{linsenmayer2017delay} 
studies the required information transmission rate for containability~\cite{wong1999systems} of scalar systems, when the delay in the communication channel  is at most the inverse of the intrinsic system's entropy rate. 
Finally, ~\cite{khojasteh2017time} compares the results presented here with those of a time-triggered implementation.

The main contribution of this paper is the precise quantification of the amount of information implicit in the timing of the triggering events across the whole spectrum of possible communication delay values, and the use of \blue{both timing information and data payload} for stabilization.} For a given event-triggering strategy, we derive necessary and sufficient conditions for the
exponential convergence of the state estimation error and the
stabilization of the plant,  revealing a \emph{phase transition}
behavior of the transmission rate as a function of the delay.  Key to our
analysis is the distinction between the \emph{information access
  rate}, \blue{that is the rate at which the controller needs to receive information, conveyed by both data payload and timing information and regulated by the classic data-rate theorem;} and the
\emph{information transmission rate}, that is the rate at which the
sensor needs to send \blue{data payload}, that is affected by channel delays, as well
as design choices such as event-triggering or time-triggering
strategies. 
We show that for sufficiently low values of the delay,
the timing information carried by the triggering events is large
enough and the system can be stabilized with any positive information
transmission rate. At a critical value of the delay, the timing
information carried by the triggering events is not enough for
stabilization, and the required information transmission rate begins
to grow. When the delay reaches the inverse of the entropy rate of the
plant, the timing information becomes completely obsolete, and the
required information transmission rate becomes larger than the
information access rate imposed by the data-rate theorem. 
We also
provide necessary conditions on the information access rate for
asymptotic stabilizability and observability with exponential
convergence guarantees; necessary conditions on the information
transmission rate for asymptotic observability with exponential
convergence guarantees; as well as a sufficient condition with the
same asymptotic behavior.  We consider both scalar and vector linear
systems without disturbances.
Extensions for future work include the consideration of disturbances
and the analysis under triggering strategies different from the one
considered~here.

\subsubsection*{Notation}
Let $\real$, $\integers$ and $\integerspos$ denote the set of real
numbers, integers, and positive integers, respectively.
We denote by $\mathcal{B}(r)$ the ball centered at $0$ of
radius~$r$. \green{We let $\log$ and $\ln$ denote the logarithm with bases
$2$ and $e$, respectively.} For a function $f : \real \rightarrow
\real^n$ and $t \in \real$, we let $f(t^+)$ denote the limit from the
right, namely $\lim_{s \downarrow t} f(s)$.
We let $M_{n,m}(\real)$
be the set of $n \times m$ matrices over the field of real
numbers. 
Given $A=[a_{i,j}]_{1\le i,j \le n} \in M_{n,n}(\real)$, we
let $\Tr(A)=\sum_{i=1}^{n} a_{ii}$ and $\det(A)$ denote its trace and
determinant, respectively. 
We let $m$ denote the Lebesgue measure on $\mathbb{\real}^{n}$, which
for $n=2$ and $n=3$ can be interpreted as area and volume,
respectively.
We let $\floor{x}$ denote the greatest integer less than or equal to
$x$, and $\ceil{x}$ denote the smallest integer greater than or equal
to $x$. We denote by $\modulo(x,y)$ the modulo function, whose value
is the remainder left after dividing $x$ by~$y$.  We let $\norm{x}$ be
the $L^2$ norm of $x$ in~$\real^n$. We let \green{ $\text{sign}(x)$ be $1$, $-1$, or $0$ when $x$ is positive, negative, or zero, respectively. }
\section{Problem formulation}\label{sec:setup}

Here we describe the system evolution, the model for the communication
channel, and the event-triggering strategy.

\subsection{System model}
We consider the standard networked control system model composed of
the plant-sensor-channel-controller tuple depicted in
Figure~\ref{fig:system}. We start with a scalar, continuous-time,
linear time-invariant (LTI) system, and then extend the model to the
vector case.

\begin{figure}
  \centering
  \includegraphics[scale=0.33]{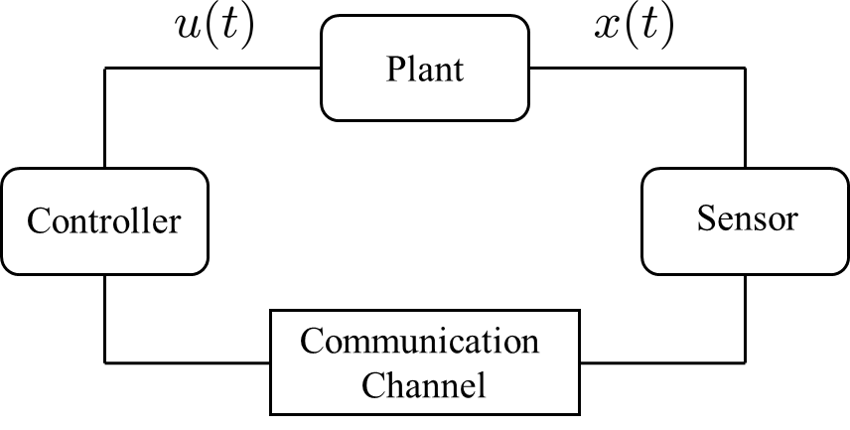}
  \caption{\red{System model. The sensor can measure the full state of the system and the controller applies the input with infinite precision and without delay. The communication channel only supports a finite rate and is subject to delay.}}\label{fig:system}
  \vspace*{-1ex}
\end{figure}

The plant dynamics are described by  
\begin{align}\label{syscon}
  \dot{x}(t)=Ax(t)+Bu(t),
\end{align}
where $x(t) \in \real$ and $u(t) \in \real$ for $t \in [0,\infty)$ are
the system state and control input, respectively. Here, $A$ is a
positive real number, 
{\green{$B$ is a nonzero real number,}}
and $|x(0)|<L$ is any bounded
initial condition, where $L$ is known to both sensor and controller.
The sensor can measure the state
of the system perfectly, and the controller can apply the control input
  with infinite precision and without delay. However, the
sensor and the controller communicate through a channel that can support
only a finite communication rate and is subject to delay. 
\red{At each triggering event, the sensor can transmit a packet composed of a finite number of bits, \blue{representing a quantized version of the state}, through the communication channel, which is received by the controller entirely and without error, after an unknown, bounded delay, as described next.}
  
\subsection{Triggering strategy and controller dynamics}\label{sec:controller-dynamics}  
We denote by $\{ t_s^k \}_{k \in \integerspos}$ the sequence of times
at which the sensor transmits to the controller a packet composed of
$g(t_s^k)$ bits representing the state of the plant. For every  $k \in
\integerspos$, we let $t_c^k $ be the time at which the controller
receives the packet that the sensor transmitted at time $t_s^k$.  We
assume a uniform upper bound, known to both the sensor and the controller,
on the unknown \emph{communication delays}
\begin{align}\label{gammma}
  \Delta_k= t_c^k-t_s^k \leq \gamma,  
\end{align}
and denote  the $k^{\text{th}}$ \emph{triggering interval} by
\begin{align*}
  \Delta'_k = t_s^{k+1}-t_s^{k}.
\end{align*}
\green{We assume the upper bound on the communication delays in \eqref{gammma} to be independent of the packet size.}
When referring to a generic triggering time or reception time, for
notational convenience we omit the superscript $k$ in $t_s^k$ and
$t_c^k$. 
\red{Our model does not assume any a priori probability
distribution for the delay, and our results hold for any random
communication delay with bounded support. }

\green{From the data received from the sensor, and from the timing at which the data is received,} the controller
maintains an estimate $\hat{x}$ of the plant state,  which  starting from $\hat{x}(t_c^{k+})$     evolves during the
inter-reception times as
\begin{align}\label{sysest}
  \dot{\hat{x}}(t)=A\hat{x}(t)+Bu(t), \quad t \in [t_c^k,t_c^{k+1}] .
\end{align}
The controller then computes the control input $u(t)$ based on this estimate. 
\green{The sensor can   compute the same estimate $\hat{x}(t)$ for the plant state at the controller via
  \emph{communication through the control input}~\cite{sahai2006necessity}. Namely, assuming that the input has been computed by the controller as $u(t)=\mu(\hat{x}(t))$, where $\mu$ an \textit{invertible} function known to both parties, the sensor can first compute $u(t)=(\dot{x}(t)-Ax(t))/B$ and
then compute $\hat{x}(t)$ by inversion.} 

The \emph{state estimation error} computed at the sensor is then
\begin{align*}
  z(t)=x(t)-\hat{x}(t).
\end{align*}
Initially,  we let $x(0)-\hat{x}_0 = z(0)$. Without updated information from the
sensor, this error grows, and the system can potentially become
unstable.  The sensor should, therefore, select the sequence of
transmission times $\{ t_s^k \}_{k \in \integerspos}$, \green{the packet
sizes $\{g(t^k_s)\}_{k \in \integerspos}$  and the corresponding  quantization strategy used to determine the data payload,} so that the controller can
ensure stability. This choice requires a certain communication rate  available in the channel, which we wish to characterize.

To select the transmission times, we adopt an event-triggering
approach. Consider the \emph{event-triggering function} known to both
sensor and controller
\begin{align} 
v(t) = v_0 e^{-\sigma t},
\label{eq:etf}
\end{align}
where $v_{0}$ and $\sigma$ are positive real numbers. A transmission
occurs whenever
\begin{align}
	|z(t)|&=v(t).
\label{eq:ets}
\end{align}
Upon reception of the packet, the controller updates the estimate of
the state according to the \textit{jump strategy}
\begin{align}
 \hat{x}(t_c^+)=\bar{z}(t_c)+\hat{x}(t_c),
 \label{eq:jumpst}
\end{align}
where $\bar{z}(t_c)$ is an estimate of ${z}(t_c)$ constructed by the
controller knowing that $|z(t_s)|=v(t_s)$, the bound~\eqref{gammma},
and the decoded packet received through the communication channel.
It follows that
\begin{align*}
|z(t_c^+)|=|x(t_c)-\hat{x}(t_c^+)|=|z(t_c)-\bar{z}(t_c)|.
\end{align*}
\green{
We also point out that if the control law is not invertible,  the sensor can perform the same computation of the controller to obtain $\hat{x}(t_c^{+})$,
 provided that it can   infer   the reception times   from   jumps in the control input. 

\green{By transmitting when the state estimation error $|z(t)|$ reaches the threshold $|v(t)|$}, the sensor effectively encodes  information in timing using the event-triggering rule~\eqref{eq:ets}. 
On the other hand, the data payload of the transmissions also carries information, and
 the sensor can choose any arbitrary, finite-precision quantization   of the state to construct the data payload as long as  
it ensures that, for all $t_c \in [t_s,t_s+\gamma]$,
}
\begin{align}
|z(t_c^+)| = |z(t_c)-\bar{z}(t_c)| \le \rho(t_s):=\rho_0  e^{-\sigma \gamma} v(t_s),
  \label{eq:jump-upp}
\end{align}
where $0<\rho_0<1$ is a given design parameter. 
Note that $v(t_c)= v_0
e^{- \sigma t_c} \geq v_0 e^{-\sigma t_s} e^{- \sigma \gamma} = v(t_s)
e^{-\sigma \gamma}$, and hence~\eqref{eq:jump-upp} ensures that at
each triggering event the estimation error drops below the triggering
function, namely
\begin{align*}
|z(t_c^+)| & \le \rho_0 v(t_c).
\end{align*}
Consequently, the sequence of transmission times $\{ t_s^k \}_{k \in
  \integerspos}$ is  monotonically increasing, i.e.,
$\Delta'_k>0$ for all $k \in \integerspos$.  \blue{Moreover, based on $\dot z=Az$ and~\eqref{eq:ets}, a new transmission occurs only after the previous packet has been delivered to the controller, that is $t_s^{k+1} > t_c^k$.}
Additionally, using $\dot
z = A z$ and~\eqref{gammma}, we deduce
\begin{align}
  |z(t_c)| &\leq v(t_s) e^{A \gamma} 
  \leq v_0 e^{- \sigma (t_c - \gamma)} e^{A \gamma} \nonumber
  \\
  &= v_0 e^{(A+\sigma) \gamma} e^{-\sigma t_c}.
  \label{expodue}
\end{align}

From \eqref{eq:jump-upp} and \eqref{expodue}, it follows that the
described triggering strategy ensures an exponentially decaying
estimation error. The design parameter $\rho_0$ regulates the
resolution of the quantization, and hence the size of the transmitted
packets; as well as the magnitude of the jumps below the triggering
function, and hence the triggering rate. These also depend on the
delay, which governs the amount of overshoot of the
estimation error above the triggering function, see
Figure~\ref{fig:saw}.

\begin{figure}
  \centering
  \includegraphics[scale=0.65]{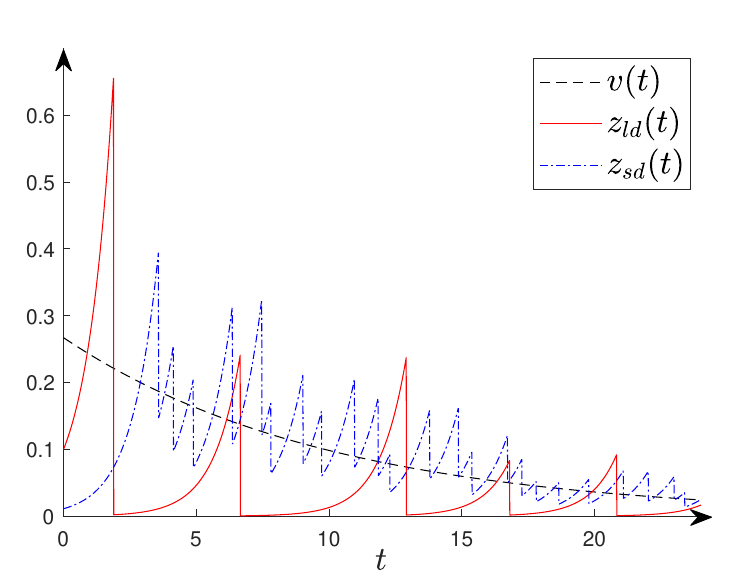}
  {\green{
  \caption{Evolution of the state estimation error $|z_{ld}(t)|$ for a larger delay upper bound $\gamma=1.2$,   and $|z_{sd}(t)|$ for a smaller delay upper bound $\gamma=0.9$.
    Here, $A=1$, $\sigma=0.1$, and $\rho_0=0.1$. The dashed exponential decaying curve represents the triggering function $v(t)=0.27e^{-\sigma t}$. A larger delay corresponds to a larger overshoot of the estimation error above the triggering function and higher uncertainty about the state at the controller. Since $\gamma$ regulates the
resolution of the quantization~\eqref{eq:jump-upp} in an exponential manner, larger delay corresponds to larger jumps under the triggering function upon reception of the packet.
    }\label{fig:saw}}}
    \vspace*{-1ex}
\end{figure}

The design parameter $v_0$ determines the initial condition of the
estimation error when the first triggering event occurs.  For any
given $0<\rho_0<1$, and $0<v_0<\infty$, our objective is to determine
the rate required to achieve these exponential bounds for all possible
delay realizations, and then provide an explicit quantization strategy
that satisfies these bounds.

\subsection{Information transmission rate}
To define the transmission rate, we take the viewpoint of the sensor
and examine the amount of information that it needs to transmit so
that the controller is able to stabilize the system.  Let $b_s(t)$ be
the number of bits \blue{in the data payload} transmitted by the sensor up to time $t$, and
define the \emph{information transmission rate} as
\begin{align*}
  R_s = \limsup_{t \rightarrow \infty}\frac{b_s(t)}{t}.
\end{align*}
Since at every triggering time, $t_s^k$ the sensor sends $g(t_s^k)$ \blue{data payload}
bits, we have
\begin{align*}
  R_s = \limsup_{N \rightarrow \infty} \frac{\sum_{k=1}^{N} g(t_s^k)}{\sum_{k=1}^{N}
    \Delta'_k}.
\end{align*}
We now make two key observations. First, in the presence of unknown
communication delays, the state estimate received by the controller
might be out of date so that the sensor might need to send data at a
\emph{higher rate} than what is needed on a channel without delay.
Second, in the presence of event-triggered transmissions, the timing
of the triggering events carries implicit information. 
For example, if
the communication channel does not introduce any delay, and assuming
that  the sensor and the controller can keep track of time with infinite precision, then the time of a triggering event reveals the system state  up to a sign, since according to~\eqref{eq:ets},
\begin{align*}
  x(t) = \hat{x}(t) \pm v(t).
\end{align*}
It follows that in this case, the controller can stabilize the system
even if the sensor uses the channel very sparingly, transmitting a
single \blue{data payload} bit at a
triggering event, 
that is at a much \emph{smaller
  rate} than what needed in any time-triggered implementation. 
  In
general, there is a trade-off between the information gain due to
triggering timing, and the information loss due to the delay.
As we shall see below, this leads to a \emph{phase transition} in the
minimum rate required to satisfy~\eqref{eq:jump-upp} and as a
consequence~\eqref{expodue}.  

Finally, it is worth pointing out that the
exponential convergence of the state estimation error to zero implies
the asymptotic stabilizability of the system.

\subsection{Information access rate}
We now consider the viewpoint of the controller and examine the amount
of information that it needs to receive from the plant to be able to
stabilize the system. 
We define $b_c(t)$ to be the amount of information, measured in bits, conveyed by both  data payload and timing information, received by the controller up to time~$t$.  We define the
\emph{information access rate} as
\begin{align*}
  R_c = \limsup_{t \rightarrow \infty} \frac{b_c(t)}{t}.
\end{align*}
Classic data-rate theorems describe the information access rate
required to stabilize the system.
They are generally stated for discrete-time systems, albeit similar
results hold in continuous time as well, see
e.g.~\cite{hespanha2002towards}.  They are based on the fundamental
observation that there is an inherent entropy rate
\begin{align*}
  h = \frac{A}{\ln 2},
\end{align*}
at which the system generates information. It follows that for the
system to be stabilizable the controller must have access to state
information at a rate
\begin{align}\label{Datarate}
  R_c \ge h.
\end{align}
This result indicates what is required by the controller, and it does
not depend on the feedback structure --- including aspects such as
communication delays, information pattern at the sensor and the
controller, and whether the times at which transmissions occur are
state-dependent, as in event-triggered control, or periodic, as in
time-triggered control.

\section{Necessary condition on the access rate}\label{sec:Necessary} 

In this section, we quantify the amount of information that the
controller needs to ensure exponential convergence of the state
estimation error or the state to zero, independently of the feedback
structure used by the sensor to decide when to transmit.  The result
obtained here generalizes~\eqref{Datarate} and establishes a common
ground to compare later  against the results for the information
transmission rate, which depend on the given policy adopted by the
sensor.
The proof follows, with minor modifications, the argument
in~\cite[Propositions $3.1$ and $3.2$]{Mitter} for discrete-time
systems.

\begin{theorem}\label{thm:necc-access-rate}
  Consider the plant-sensor-channel-controller model described in
  Section~\ref{sec:setup}, with plant dynamics~\eqref{syscon}, 
  and state estimation error $z (t)$, and  let $\sigma>0$. The following necessary conditions hold:
  \begin{enumerate}
  \item If the state
    estimation error satisfies
    \begin{align*}
      |z(t)|\le |z(0)| ~e^{-\sigma t},
    \end{align*}
    then
   \begin{align}
     b_c(t) \ge t~\frac{A+\sigma}{\ln 2}+~ \log \frac{L}{|z(0)|}.
     \label{necz}
     \end{align}
   \item If the system
       is stabilizable and
   \begin{align*}
 |x(t)| \le |x(0)| ~e^{-\sigma t},
 \end{align*}
    then
      \begin{align}
     b_c(t) \ge t ~\frac{A+\sigma}{\ln 2}.
     \label{necx}
     \end{align}
  \end{enumerate}
  In both cases, the necessary information access rate is
 \begin{align}
R_c \ge \frac{A+\sigma}{\ln 2}.
\label{necrc}
\end{align}
\end{theorem}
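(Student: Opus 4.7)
The plan is to adapt the perturbation / indistinguishability argument of~\cite{Mitter} (written for discrete time) to the continuous-time setting. The central step is to compare the true trajectory with one starting from a perturbed initial condition that produces an identical information pattern at the controller. Concretely, suppose $x_1(0),x_2(0)\in[-L,L]$ are indistinguishable up to time~$t$, meaning they induce the same sequence of reception times $\{t_c^k\}$ and the same decoded packets. Then the controller applies the same input $u(\cdot)$ in both cases, and subtracting~\eqref{syscon} from itself yields $x_2(t)-x_1(t)=e^{At}(x_2(0)-x_1(0))$, and likewise $z_2(t)-z_1(t)=e^{At}(z_2(0)-z_1(0))$ for the estimation errors, since $\dot z = Az$.

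For part~(i), applying the hypothesis $|z_i(t)|\le |z_i(0)|e^{-\sigma t}$ to both trajectories and using the triangle inequality gives
\begin{align*}
  e^{At}|z_2(0)-z_1(0)| \le |z_2(t)|+|z_1(t)| \le (|z_1(0)|+|z_2(0)|)e^{-\sigma t},
\end{align*}
so every indistinguishability cell containing a point near $z(0)$ has diameter of order at most $|z(0)|\,e^{-(A+\sigma)t}$. Since the admissible initial conditions fill an interval of length $2L$ partitioned by such cells, the number of cells is at least of order $L/(|z(0)|e^{-(A+\sigma)t})$, and counting the bits needed to index them produces~\eqref{necz}. Part~(ii) follows from the identical computation applied to $|x_i(t)|\le |x_i(0)|e^{-\sigma t}$: the right-hand side is now bounded by $2Le^{-\sigma t}$ uniformly, so the cell diameter is at most $2Le^{-(A+\sigma)t}$ independently of the starting point, yielding $2^{b_c(t)}\ge e^{(A+\sigma)t}$ and~\eqref{necx}. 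Dividing either bit bound by $t$ and letting $t\to\infty$ gives~\eqref{necrc}.

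The main obstacle I anticipate is making the indistinguishability-cell construction rigorous in the event-triggered, variable-length setting, where not only the packet contents but also the triggering times themselves depend on the initial condition. A standard remedy is to invoke a Kraft-type counting bound on the admissible prefix-free bit strings of total length at most $b_c(t)$, so that the number of distinct receivable sequences is still controlled by $2^{b_c(t)}$. One must also verify that nontrivial perturbations $\delta$ are admissible inside $[-L,L]$, which holds once $t$ is large enough because the cell-diameter bound is exponentially small while the admissible interval has fixed length~$2L$; for part~(ii), the stabilizability hypothesis is used only to ensure that the assumed exponential bound on $|x(t)|$ is compatible with some admissible control.
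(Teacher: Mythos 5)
Your proposal is essentially the paper's own argument: like the paper, it adapts the counting argument of Mitter--Tatikonda, and your indistinguishability-cell bound (same received packets and timing imply the same control, so cells have diameter of order $e^{-(A+\sigma)t}$, while at most $2^{b_c(t)}$ cells exist) is exactly the backward-in-time form of the paper's covering computation, which covers the forward uncertainty set $\Gamma_t$ of measure $2Le^{At}$ by balls of radius $|z(0)|e^{-\sigma t}$ for part (i) and covers $\mathcal{B}(L)$ by the decay sets $\Pi_{\{u\}}$ of measure at most $2Le^{-(A+\sigma)t}$ for part (ii), then divides by $t$. The only point to tighten is the additive constant $\log\frac{L}{|z(0)|}$ in part (i), which requires reading the decay hypothesis as a uniform envelope $|z(0)|e^{-\sigma t}$ over admissible initial conditions (as the paper implicitly does with its fixed covering radius), since otherwise cells far from the realized $z(0)$ may be larger than $|z(0)|e^{-(A+\sigma)t}$.
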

\begin{proof}
  From~\eqref{syscon}, we have 
  \begin{subequations}
    \begin{align} 
    x(t) &= e^{At}x(0)+\alpha(t), \label{eq:auxx1} \\ 
    \alpha(t)&=e^{At} \int_{0}^{t} 
    {e^{-A\tau}Bu(\tau)d\tau}. \label{eq:auxx}
  \end{align}
\end{subequations}
Using  \eqref{eq:auxx1} we define the uncertainty set at time $t$ 
 \begin{align*} 
	\Gamma_t= \{x \in \real :
    x = e^{At}x(0)+\alpha(t) \text{ and } x(0) \in \mathcal{B}(L) \}.
  \end{align*}
  The state of the system can be any point in this uncertainty set.
  Letting $\epsilon(t)=|z(0)|~e^{- \sigma t}$, we can then find a
  lower bound on $b_c(t)$ by counting the number of one-dimensional
  balls of radius $\epsilon(t)$ that cover $\Gamma_t$. Specifically,
  \begin{align*}
    b_c(t) &\ge \log \frac{m(\Gamma_t)}{ m(\mathcal{B} (\epsilon(t)))}
    = \log \frac{e^{At}m (\mathcal{B}(L))}{2 |z(0)| ~e^{-\sigma t}}\\
    &=t ~\log e^{A+\sigma} + \log \frac{L}{|z(0)|},
  \end{align*}
  which proves (i).
 
  To prove (ii), for any given control trajectory $\{u(\tau)\}_{\tau
    \in [0,t]}$, define the set of initial conditions for which the
  plant state $x(t)$ tends to zero exponentially with rate $\sigma$,
  i.e.,
  \begin{align*}
    \Pi_{\{u(\tau)\}_{\tau \in [0,t]}}=\{x(0)\in \mathcal{B}(L):
    |x(t)|\le |x(0)|~ e^{- \sigma t} \} .
  \end{align*}
  By~\eqref{eq:auxx} $x(t)$ depends linearly on $\{u(\tau)\}_{\tau \in
    [0,t]}$, so that all the sets $\Pi_{\{u(\tau)\}_{\tau \in [0,t]}}$
  are linear transformations of each other. The measure of
  $\Pi_{\{u(\tau)=0\}_{\tau \in [0,t]}}$ is $2 |x(0)| e^{-At}
  e^{-\sigma t}$, which is upper bounded by $2 L e^{-At} e^{-\sigma
    t}$. Hence, this quantity also upper bounds the measure of each
  $\Pi_{\{u(\tau)\}_{\tau \in [0,t]}}$.  It follows that we can
  determine a lower bound for $b_c(t)$ by counting the number of sets
  of measure $2 L e^{-At} e^{-\sigma t}$ required to cover the ball
  $|x(0)| \le L$,
  and we have
  \begin{align*}
    b_c(t) & \ge \log\frac{2 L}{2 L e^{-(A+\sigma)t}} =
    t~\frac{A+\sigma}{\ln 2},
  \end{align*}
  showing (ii). 
  Finally, \eqref{necrc}   follows by dividing~\eqref{necz}
  and~\eqref{necx} by $t$ and taking the limit for $t \rightarrow
  \infty$. 
\end{proof}

\begin{remark} {\rm Theorem~\ref{thm:necc-access-rate} is
    valid for any control scheme, and the controller does not
    necessarily have to compute the state estimate
    following~\eqref{sysest}. This result can be viewed as an
    extension of the data-rate theorem with exponential convergence
    guarantees. It states that to have exponential convergence of the
    estimation error and the state, the access rate should be larger
    than the estimation entropy, the latter concept having been
    recently introduced in~\cite{liberzon2016entropy}. A similar
    result for continuous-time systems appears in~\cite{PT-JC:16-tac},
    but only for linear feedback controllers. 
\red{In fact, this work shows that the bound in~\eqref{necrc} is also sufficient for scalar systems when the controller does not use any timing information about the triggering events.}
 The classic formula of
    the data-rate theorem~\eqref{Datarate}~\cite{Mitter,nair2004stabilizability}, can be derived as a
    special case of Theorem~\ref{thm:necc-access-rate} by taking
    $\sigma \rightarrow 0$ and using
    continuity.
  }
  \oprocend
\end{remark}

\section{Necessary and sufficient conditions on the transmission
  rate}\label{sec:estim-err}

In this section, we determine necessary and sufficient conditions on
the transmission rate for the exponential convergence of the
estimation error under the event-triggered control strategy described
in Section~\ref{sec:setup}.
We start by observing that in an
event-triggering implementation, the transmission times and the packet
sizes are state-dependent.  Thus, there may be some initial conditions
and delay realizations for which both the necessary and sufficient
transmission rates are arbitrarily small. For this reason,  we provide results that
hold in worst-case conditions, namely accounting for all possible
realizations of the delay and initial conditions, without
assuming any a priori distribution on these realizations.

\subsection{Necessary condition on the transmission rate}\label{sec:Necresult}
  
Here we quantify the necessary rate at which the sensor needs to
transmit to ensure the exponential convergence of the estimation error
to zero under the given event-triggering strategy. This rate depends
on the number of bits that the sensor transmits at each triggering
event, as well as the frequency with which transmission events occur,
according to the triggering rule. Our strategy to obtain a necessary
rate consists of appropriately bounding each of these quantities.

To obtain a lower bound on the number of bits transmitted at each
triggering event, consider the uncertainty set of the sensor about the
estimation error at the controller, $z(t_c)$, given $t_s$
\begin{align*}
  \Omega(z(t_c) | t_s)=\{y: y=\pm v(t_s)e^{A(t_c-{t}_s)}, \ t_c \in
  [t_s,t_s+\gamma]\}.
\end{align*}
On the other hand, consider the uncertainty from the point of view of
the controller about $z(t_c)$, given $t_c$
\begin{align*}
  \Omega(z(t_c) |t_c) &= \{y: y=\pm v(\bar{t}_r)e^{A(t_c-\bar{t}_r)},
  \ \bar{t}_r \in [t_c-\gamma,t_c]\}.
\end{align*}
Clearly, for any $t_c \in [t_s, t_s + \gamma]$, we have $\Omega(z(t_c)
|t_c) \neq \Omega(z(t_c) |t_s)$, namely there is a \textit{mismatch}
between the uncertainties at the controller and at the sensor. The next
result shows that the uncertainty at the sensor is always smaller than
the one at the controller.

\begin{lemma}\label{inclusionsets}
  Consider the plant-sensor-channel-controller model described in
  Section~\ref{sec:setup}, with plant dynamics~\eqref{syscon},
  estimator dynamics~\eqref{sysest}, event-triggering
  function~\eqref{eq:etf}, triggering strategy~\eqref{eq:ets}, and
  jump strategy~\eqref{eq:jumpst}. Then, $\Omega(z(t_c) |t_s)
  \subseteq\Omega(z(t_c) |t_c)$.
\end{lemma}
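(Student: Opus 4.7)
The plan is to establish the inclusion pointwise: given any $y \in \Omega(z(t_c)|t_s)$, I will exhibit an explicit parameter $\bar{t}_r \in [t_c - \gamma, t_c]$ witnessing membership in $\Omega(z(t_c)|t_c)$, with the same sign. In other words, I want to reparametrize the sensor's uncertainty curve $\tau \mapsto v(t_s)e^{A\tau}$ for $\tau \in [0,\gamma]$ into the controller's uncertainty curve $\bar{t}_r \mapsto v(\bar{t}_r)e^{A(t_c-\bar{t}_r)}$ for $\bar{t}_r \in [t_c-\gamma, t_c]$.

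First, by definition $y = \epsilon\, v(t_s)\, e^{A\tau}$ for some sign $\epsilon \in \{+1,-1\}$ and some $\tau \in [0,\gamma]$. It suffices to find $\bar{t}_r \in [t_c - \gamma, t_c]$ with $v(\bar{t}_r)\, e^{A(t_c - \bar{t}_r)} = v(t_s)\, e^{A\tau}$, since then $y = \epsilon\, v(\bar{t}_r)\, e^{A(t_c - \bar{t}_r)} \in \Omega(z(t_c)|t_c)$. Substituting the explicit form $v(t) = v_0 e^{-\sigma t}$ from \eqref{eq:etf} and taking logarithms, the equation reduces to the linear relation $(A+\sigma)\bar{t}_r = A(t_c - \tau) + \sigma t_s$, whose unique solution is
\begin{align*}
\bar{t}_r \;=\; \frac{A(t_c - \tau) + \sigma t_s}{A+\sigma}.
\end{align*}

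It remains to verify that $\bar{t}_r \in [t_c - \gamma, t_c]$. A direct computation yields
\begin{align*}
t_c - \bar{t}_r \;=\; \frac{\sigma(t_c - t_s) + A\tau}{A+\sigma}.
\end{align*}
Since $t_c - t_s \in [0,\gamma]$ by the delay bound~\eqref{gammma}, $\tau \in [0,\gamma]$ by construction, and $A,\sigma > 0$, the right-hand side is a convex combination of numbers in $[0,\gamma]$ and hence lies in $[0,\gamma]$. Therefore $\bar{t}_r \in [t_c - \gamma, t_c]$, completing the inclusion.

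Conceptually, the lemma is just the statement that the decay of the triggering function $v$ at rate $\sigma$ compensates the controller's uncertainty about the trigger time: the sensor's set is parametrized by an expansion duration $\tau$, whereas the controller's is parametrized by a hypothetical trigger time $\bar{t}_r$, and the linear change of variables $\tau \mapsto \bar{t}_r$ has slope $A/(A+\sigma)<1$, so the interval $[0,\gamma]$ maps strictly inside $[0,\gamma]$. There is no real obstacle beyond executing this reparametrization cleanly; the only thing to verify is that the algebra delivers a value of $\bar{t}_r$ in the allowed range, which the above computation confirms.
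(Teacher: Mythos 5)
Your proof is correct. You prove the inclusion pointwise: for each $y=\pm v(t_s)e^{A\tau}$, $\tau\in[0,\gamma]$, you solve $v(\bar{t}_r)e^{A(t_c-\bar{t}_r)}=v(t_s)e^{A\tau}$ exactly, obtaining $\bar{t}_r=\frac{A(t_c-\tau)+\sigma t_s}{A+\sigma}$, and the check that $t_c-\bar{t}_r=\frac{\sigma(t_c-t_s)+A\tau}{A+\sigma}$ is a convex combination of $t_c-t_s\in[0,\gamma]$ and $\tau\in[0,\gamma]$ is exactly what is needed (it uses the delay bound~\eqref{gammma} and $A,\sigma>0$), so the witness indeed lies in $[t_c-\gamma,t_c]$. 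The paper argues differently: it first identifies both uncertainty sets explicitly as symmetric pairs of intervals, $\Omega(z(t_c)|t_s)=[v(t_s),v(t_s)e^{A\gamma}]\cup[-v(t_s)e^{A\gamma},-v(t_s)]$ and, using that $v(\bar{t}_r)e^{A(t_c-\bar{t}_r)}=v_0e^{At_c}e^{-(A+\sigma)\bar{t}_r}$ is monotone in $\bar{t}_r$, $\Omega(z(t_c)|t_c)=[v(t_c),v(t_c)e^{(A+\sigma)\gamma}]\cup[-v(t_c)e^{(A+\sigma)\gamma},-v(t_c)]$, and then concludes by comparing endpoints, $v(t_s)\ge v(t_c)$ and $v(t_s)e^{A\gamma}\le v(t_c)e^{(A+\sigma)\gamma}$. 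The two routes are equivalent in substance (your change of variables is precisely the inverse of the monotone parametrization the paper uses), but they package the argument differently: the paper's interval picture makes visible how much larger the controller's uncertainty is, with the gap factor $e^{\sigma\gamma}$ at the outer endpoint, which is reused in the rate discussion, while your witness construction avoids having to argue that the parametrized images are full intervals and isolates the role of $\sigma>0$ as the slope $A/(A+\sigma)<1$ in the reparametrization. Either proof is acceptable.
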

\begin{proof}
  The uncertainty set of the sensor can be expressed as
  \begin{align*}
    \Omega(z(t_c) |t_s)=[v(t_s),v(t_s) e^{A \gamma}] \cup [-v(t_s) e^{A
      \gamma},-v(t_s)].
  \end{align*}
  Noting that for any $t_c\in [t_s,t_s+\gamma]$,
  $v(\bar{t}_r)e^{A(t_c-\bar{t}_r)}$ is a decreasing function of
  $\bar{t}_r$, we have
  \begin{align*}
    \Omega(&z(t_c) |t_c)=\\
    & [v(t_c),v(t_c) e^{(A+\sigma)\gamma}] \cup [-v(t_c)
    e^{(A+\sigma)\gamma},-v(t_c)].
  \end{align*}
  The result now follows by noting that, since $v$ is a decreasing
  function, for all $t_c \in [t_s,t_s+\gamma]$ we have $v(t_s) \ge
  v(t_c)$ and $v(t_s) e^{A \gamma} \le v(t_c) e^{(A+\sigma) \gamma}$.
\end{proof}

To ensure that~\eqref{eq:jump-upp} holds, the controller needs to
reduce the state estimation error $z(t_c)$ to within an interval of
radius $\rho(t_s)$. From Lemma~\ref{inclusionsets}, this implies that
the sensor needs to cover at least the uncertainty set $\Omega(z(t_c)
| t_s)$ with one-dimensional balls of radius $\rho(t_s)$. This
observation leads us to the following lower bound on the number of
bits that the sensor must transmit at every triggering event.

\begin{lemma}\label{thm:necc-cond-ET-g}
  Under the assumptions of Lemma~\ref{inclusionsets}, if
  \eqref{eq:jump-upp} holds for all $k \in \mathbb{N}$, then the
  packet size at every triggering event must satisfy
  \begin{align}\label{g'eq}
    g(t_s^k) \ge \max \left\{0,\log \frac{(e^{A \gamma}-1)}{ \rho_0
        e^{-\sigma \gamma} }\right\}.
  \end{align}
\end{lemma}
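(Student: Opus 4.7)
The plan is to establish the bound via a worst-case covering argument applied to the sensor's uncertainty set $\Omega(z(t_c) | t_s)$. First I would reuse the explicit characterization of this set obtained in the proof of Lemma~\ref{inclusionsets}, which identifies it as the union of two disjoint intervals $[v(t_s), v(t_s) e^{A\gamma}]$ and $[-v(t_s) e^{A\gamma}, -v(t_s)]$, each of length $v(t_s)(e^{A\gamma}-1)$, so that $m(\Omega(z(t_c)|t_s)) = 2 v(t_s)(e^{A\gamma}-1)$.

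Next I would set up the counting argument. For~\eqref{eq:jump-upp} to hold over every delay realization $\Delta_k \in [0,\gamma]$ and every sign of $z(t_s^k)$, the controller must produce $\bar{z}(t_c)$ within $\rho(t_s^k)$ of every realized $z(t_c) \in \Omega(z(t_c)|t_s^k)$. Since the packet has only $g(t_s^k)$ bits, the encoder can distinguish at most $2^{g(t_s^k)}$ cells of $\Omega(z(t_c)|t_s^k)$, and the $z(t_c)$ values within each such cell must cluster in an interval of width $2\rho(t_s^k)$ centered at the associated reconstruction. This yields the covering inequality
\begin{align*}
  2^{g(t_s^k)} \cdot 2\rho(t_s^k) \ge m(\Omega(z(t_c)|t_s^k)) = 2 v(t_s^k)(e^{A\gamma}-1).
\end{align*}
Substituting $\rho(t_s^k) = \rho_0 e^{-\sigma\gamma} v(t_s^k)$, taking logarithms to isolate $g(t_s^k)$, and combining with the trivial nonnegativity bound $g(t_s^k) \ge 0$, delivers the stated result.

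The main obstacle will lie in formalizing the cell-width claim, since in principle the decoder could exploit the reception time $t_c$ as side information and produce different reconstructions for different $t_c$. I would handle this by parametrizing the realizations through the injective map $(s,\Delta) \in \{\pm 1\} \times [0,\gamma] \mapsto s v(t_s^k) e^{A\Delta}$ onto $\Omega(z(t_c)|t_s^k)$, noting that for each fixed packet value $b$ the map $\Delta \mapsto \bar{z}(b, t_s^k + \Delta)$ is determined at the decoder, so the set of $(s,\Delta)$ consistent with $|s v(t_s^k) e^{A\Delta} - \bar{z}(b, t_s^k + \Delta)| \le \rho(t_s^k)$ has image in $\Omega(z(t_c)|t_s^k)$ of Lebesgue measure at most $2\rho(t_s^k)$, so the covering inequality survives even with $t_c$-dependent decoding.
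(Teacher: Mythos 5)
Your skeleton---measuring $\Omega(z(t_c)\,|\,t_s)$ as two intervals of total length $2v(t_s)(e^{A\gamma}-1)$ and counting $2^{g(t_s^k)}$ messages against cells of diameter $2\rho(t_s^k)$---is the same covering argument the paper uses, and it produces the same expression. The gap is in the step you yourself identify as the main obstacle: your proposed justification of the cell-width claim is false. Fixing $t_s^k$ and letting only $(s,\Delta)$ vary, nothing limits the set of consistent values $s\,v(t_s^k)e^{A\Delta}$ to measure $2\rho(t_s^k)$, because the reconstruction $\bar{z}(b,t_s^k+\Delta)$ is allowed to \emph{track} the trajectory as $\Delta$ varies: the decoder knows $t_c$, and if for that packet value it reconstructs along $\bar z(b,t_c)=v(t_s^k)e^{A(t_c-t_s^k)}$ the error is zero for every $\Delta\in[0,\gamma]$, so a single packet value is consistent with the entire branch $[v(t_s^k),v(t_s^k)e^{A\gamma}]$, of measure $v(t_s^k)(e^{A\gamma}-1)$, which exceeds $2\rho(t_s^k)$ whenever $e^{A\gamma}-1>2\rho_0e^{-\sigma\gamma}$, i.e.\ essentially whenever \eqref{g'eq} is nontrivial. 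This is not a pathological decoder: the paper's own sufficient scheme (Lemma~\ref{lemmasu}, Theorem~\ref{thm:suf-cond-ET}) sends one packet per event (sign plus quantized $t_s$) and its reconstruction \eqref{ctrlapp1} satisfies \eqref{eq:jump-upp} for \emph{all} delays at that fixed $t_s$; your measure-at-most-$2\rho$ claim would contradict its existence.

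The uncertainty that the finite packet must resolve is not the delay for a known $t_s$, but the controller's ignorance of $t_s$ (and the sign) at reception time. A correct formalization therefore works at a fixed reception time $t_c$: there each packet value $b$ yields a single point $\bar z(b,t_c)$; across worst-case realizations the values $z(t_c)=\pm v(t_s)e^{A(t_c-t_s)}$ with $t_s\in[t_c-\gamma,t_c]$ sweep the controller's set $\Omega(z(t_c)\,|\,t_c)$ of measure $2v(t_c)(e^{(A+\sigma)\gamma}-1)$, while the tolerance $\rho(t_s)\le\rho_0 v(t_c)$, so each packet value handles a set of diameter at most $2\rho_0 v(t_c)$ and one gets $2^{g}\ge (e^{(A+\sigma)\gamma}-1)/\rho_0$, which implies \eqref{g'eq}. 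This is precisely the role of Lemma~\ref{inclusionsets}: the inclusion $\Omega(z(t_c)\,|\,t_s)\subseteq\Omega(z(t_c)\,|\,t_c)$ is what licenses stating the covering requirement, and hence the (weaker) lower bound, in terms of the sensor's set. In your write-up the inclusion lemma is used only to read off the measure of the sensor's set, and its actual role is replaced by the invalid fixed-$t_s$ argument, so the key covering inequality $2^{g(t_s^k)}\cdot 2\rho(t_s^k)\ge m(\Omega(z(t_c)\,|\,t_s^k))$ is left unjustified as written.
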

\begin{proof}
  We compute the number of bits that must be transmitted to guarantee
  that the sensor uncertainty set $\Omega(z(t_c) | t_s)$ is covered by
  balls of radius $\rho(t_s)$. Define $\chi_\gamma=\{y: y=e^{A t}, t
  \in [0,\gamma]\}$. Since $g(t_s)$ is the packet size, it is
  non-negative. Hence, $ g(t_s) \ge \max
  \left\{0,H_{\rho(t_s)}\right\}$, where
  \begin{align}\label{entropyH}
    H_{\rho(t_s)} &:=  \log \frac{m(\Omega(z(t_c) |
      t_s))}{m(\mathcal{B}(\rho(t_s)))}  \nonumber 
    \\ 
    & = \log \frac{2 v(t_s) m(\chi_\gamma) }{2 \rho_0 e^{-\sigma
        \gamma} v(t_s)} \nonumber
    \\
    & = \log \frac{2 v(t_s)(e^{A \gamma}-1)}{2 \rho_0 e^{-\sigma
        \gamma} v(t_s)},
  \end{align}
  and the result follows.
\end{proof}
Our next goal is to characterize the frequency with which transmission
events are triggered. We define the triggering rate
\begin{align} \label{trate} R_{tr} &= \limsup_{N\rightarrow
    \infty}\frac{N}{\sum_{k=1}^N \Delta'_k} .
\end{align}
First, we provide an upper bound on the triggering rate that holds for
all initial conditions and
possible communication delays upper bounded by $\gamma$.

\begin{lemma}\label{Lemma1}
  Under the assumptions of Lemma~\ref{inclusionsets},
  if~\eqref{eq:jump-upp} holds for all $k \in \mathbb{N}$, then the
  triggering rate is upper bounded as
  \begin{align}\label{consecu1}
    R_{tr} \le \frac{A+\sigma}{- \ln (\rho_0 e^{-\sigma \gamma})}.
  \end{align}
\end{lemma}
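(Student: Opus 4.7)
The plan is to obtain a uniform lower bound on each inter-triggering interval $\Delta'_k$ and then simply divide to bound the triggering rate. The key observation is that between a reception at $t_c^k$ and the next triggering at $t_s^{k+1}$ the estimation error evolves freely under $\dot z = Az$, so $|z(t)| = |z(t_c^{k+})|\,e^{A(t-t_c^k)}$ on this interval. At the next triggering time, the triggering rule~\eqref{eq:ets} forces $|z(t_s^{k+1})| = v(t_s^{k+1})$, while the jump bound~\eqref{eq:jump-upp} gives $|z(t_c^{k+})| \le \rho_0 e^{-\sigma\gamma}\,v(t_s^k)$.

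Combining these three facts, I would write
\begin{align*}
v(t_s^{k+1}) \;=\; |z(t_c^{k+})|\,e^{A(t_s^{k+1}-t_c^k)} \;\le\; \rho_0 e^{-\sigma\gamma}\,v(t_s^k)\,e^{A(t_s^{k+1}-t_c^k)} .
\end{align*}
Dividing through by $v(t_s^k) = v_0 e^{-\sigma t_s^k}$, using $v(t_s^{k+1})/v(t_s^k) = e^{-\sigma \Delta'_k}$, and noting that $t_s^{k+1} - t_c^k = \Delta'_k - \Delta_k \le \Delta'_k$ (because $\Delta_k \ge 0$), I arrive at
\begin{align*}
e^{-\sigma \Delta'_k} \;\le\; \rho_0 e^{-\sigma\gamma}\,e^{A\Delta'_k} ,
\end{align*}
equivalently $e^{-(A+\sigma)\Delta'_k} \le \rho_0 e^{-\sigma\gamma}$. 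Taking the natural logarithm (and recalling $\rho_0 < 1$ so that the right-hand side is below $1$ and its log is negative) yields the per-interval bound
\begin{align*}
\Delta'_k \;\ge\; \frac{-\ln(\rho_0 e^{-\sigma\gamma})}{A+\sigma} .
\end{align*}

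Finally, summing this lower bound over $k=1,\dots,N$ gives $\sum_{k=1}^N \Delta'_k \ge N\,\bigl(-\ln(\rho_0 e^{-\sigma\gamma})\bigr)/(A+\sigma)$, and inserting this into the definition~\eqref{trate} of $R_{tr}$ and taking $\limsup$ as $N\to\infty$ produces exactly~\eqref{consecu1}. There is no real obstacle here: the only subtlety is handling the unknown delay $\Delta_k$, which is absorbed cleanly by the monotone bound $t_s^{k+1}-t_c^k \le \Delta'_k$, so the resulting rate bound is uniform over all admissible delay realizations and initial conditions, as desired.
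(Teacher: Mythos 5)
Your proposal is correct and follows essentially the same argument as the paper: both combine the jump bound~\eqref{eq:jump-upp} with the triggering condition at $t_s^{k+1}$ and absorb the delay via $t_s^{k+1}-t_c^k \le t_s^{k+1}-t_s^k$ to get the uniform lower bound $\Delta'_k \ge -\ln(\rho_0 e^{-\sigma\gamma})/(A+\sigma)$, which immediately yields~\eqref{consecu1}.
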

\begin{proof}
  Consider two successive triggering times $t_s^k$ and $t_s^{k+1}$ and
  the reception time $t_c^k$. We have $t_s^k \leq t_c^k \leq
  t_s^{k+1}$. From~\eqref{syscon} and~\eqref{sysest}, we have
  $\dot{z}(t)=A(x(t)-\hat{x}(t))=Az(t)$.  The triggering time
  $t_s^{k+1}$ is defined by
  \begin{align}\label{needforin}
    |z(t_c^{k+}) e^{A(t_s^{k+1}-t_c^k)}|=v(t_s^{k+1}).
  \end{align}
  From~\eqref{eq:jump-upp}, we have  
  \begin{align*}
    \rho_0 e^{-\sigma \gamma} v(t_s^k) e^{A(t_s^{k+1}-t_c^k)} \ge
    v(t_s^{k+1}).
  \end{align*}
  Using \eqref{eq:etf} and   $t_s^k \le t_c^k $,  it follows that
  \begin{align*}
    \rho_0 e^{-\sigma \gamma} v_0 e^{-\sigma
      t_s^k} e^{A(t_s^{k+1}-t_s^k)} \ge v_0
    e^{-\sigma t_s^{k+1}},
  \end{align*}
  and after some algebra we obtain
  \begin{align*}
    (A+\sigma)(t_s^{k+1}-t_s^k) \ge - \ln (\rho_0 e^{-\sigma \gamma}).
  \end{align*}
 We then have the uniform lower
  bound for all $k \in \mathbb{N}$
  \begin{align}\label{lowerboundzenoebeha}
    \Delta'_k= t_s^{k+1}-t_s^k \ge  \frac{- \ln (\rho_0
      e^{-\sigma \gamma})}{A + \sigma},
  \end{align}
  which substituted into~(\ref{trate}) leads to the desired upper
  bound on the triggering rate.
\end{proof}

\red{
\begin{remark}
{\rm In addition to providing an upper bound on the
    triggering rate, Lemma~\ref{Lemma1} also shows that our
    event-triggered scheme does not exhibit ``Zeno behavior''
    \cite{johansson1999regularization}, namely the occurrence of infinitely many
    triggering events in a finite time interval. This follows from     the uniform lower bound for all $k \in \mathbb{N}$ on the size of     triggering interval in~\eqref{lowerboundzenoebeha}.} 
\oprocend
\end{remark}
}

If $\Delta_k=0$ and $|z(t_c^{k+})|=\rho_0 e^{-\sigma \gamma}
v(t_s)$ for all $k \in \integerspos$, then the upper bound on the triggering rate in Lemma~\ref{Lemma1} is tight.  \green{Our next goal is} to provide a lower bound on the triggering rate that holds for a given initial condition and delay value. To obtain \green{a nontrivial} lower bound, we need to restrict the class of allowed quantization policies \green{used to construct the data payload.}
We assume that, at each triggering event, there exists a delay such that the sensor can reduce the
estimation error at the controller to at most a fraction of the
maximum value $\rho(t_s)$ required by~\eqref{eq:jump-upp}. This is a
natural assumption, and in practice corresponds to assuming an upper
bound on the size of the packet that the sensor can transmit at every triggering event \green{and hence on the precision of the quantization strategy.} Without such a bound, a packet may carry an
unlimited amount of information, \blue{the quantization error may become arbitrary small,} and $|z(t_c^+)|$ may become
arbitrarily close to zero for all delay values, resulting in a
triggering rate arbitrarily close to zero. The next assumption precludes such an unrealistic scenario.

\red{\begin{ansatz}\label{Defnition11}
The controller can only achieve $\nu$-precision quantization. Formally,  letting $\beta =\frac{1}{A} \ln
  (1+2 \rho_0 e^{-\sigma \gamma})$, we assume
  there exists  a delay realization $\{ \Delta_k \le \beta
  \}_{k \in \integerspos}$, an initial condition $x(0)$,  and a real number  $\nu\geq1$, such that for all $k \in
  \mathbb{N}$
  \begin{align}\label{eq:deltaz}
    |z(t_c^k)-\bar{z}(t_c^k)|\geq \frac{\rho(t_s^k)}{\nu}.
  \end{align} 
  
\end{ansatz}}
\smallskip
\color{black}

The upper bound $\beta$ on the delay in Assumption~\ref{Defnition11}
corresponds to the time required for the state estimation error to
grow from $z(t_s)$ to $z(t_s)+2 \rho(t_s)$. In fact,
\begin{align*}
  z(t_c) &= z(t_s) e^{A \beta} = z(t_s) (1+ 2 \rho_0 e^{-\sigma
    \gamma}),
\end{align*}
from which it follows that 
\begin{align*}
z(t_c) - z(t_s) =2 z(t_s) \rho_0
e^{-\sigma \gamma}, 
\end{align*}
and since $z(t_s) = \pm v(t_s)$, we have
\begin{align*}
  |z(t_c) - z(t_s)|=2 \rho(t_s).
\end{align*}
To ensure~\eqref{eq:jump-upp}, the size of the quantization cell
should be at most $2\rho(t_s)$. As the delay takes values in
$[0,\beta]$, the value of $z(t_c)$ sweeps an area of measure $2
\rho(t_s)$.  It follows that Assumption~\ref{Defnition11} corresponds
to the existence of a value of the communication delay for which the
uncertainty ball about the state shrinks from having a radius at most
$\rho(t_s)$ to having a radius at least $\rho(t_s)/\nu$.  With this
assumption in place, we can now compute the desired
lower bound on the triggering rate.

\begin{lemma}\label{thm:necc-cond-ET}
  Under the assumptions of Lemma~\ref{inclusionsets},
  if~\eqref{eq:jump-upp} holds with $\nu$-precision for all $k \in
  \mathbb{N}$, then there exists a delay realization $\{ \Delta_k
  \}_{k \in \integerspos}$ and an initial condition such that
  \begin{align*}
    R_{tr} \geq \frac{A+\sigma}{\ln\nu+\ln(2+\frac{e^{\sigma \gamma}}{\rho_0})}.
  \end{align*}
\end{lemma}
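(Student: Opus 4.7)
The plan is to convert the desired lower bound on $R_{tr}$ into a uniform upper bound on the triggering interval $\Delta'_k = t_s^{k+1}-t_s^k$, since by~\eqref{trate} such a bound immediately gives $R_{tr} \ge (A+\sigma)/[\ln\nu + \ln(2 + e^{\sigma\gamma}/\rho_0)]$. The strategy is to exhibit, for the specific initial condition and delay realization $\{\Delta_k \le \beta\}_{k\in\integerspos}$ supplied by Assumption~\ref{Defnition11}, a chain of inequalities relating $\Delta'_k$ to $\Delta_k$, and then to use $\Delta_k \le \beta$ to eliminate the dependence on the delay.

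First I would argue that between the reception time $t_c^k$ and the next triggering time $t_s^{k+1}$ the estimation error satisfies $\dot z = A z$, so $|z(t_s^{k+1})| = |z(t_c^{k+})|\, e^{A(t_s^{k+1}-t_c^k)}$. Combining this with the triggering rule $|z(t_s^{k+1})| = v(t_s^{k+1})$ yields the key identity $|z(t_c^{k+})|\, e^{A(t_s^{k+1}-t_c^k)} = v(t_s^{k+1})$. Next I would invoke the jump rule~\eqref{eq:jumpst} to rewrite $z(t_c^{k+}) = z(t_c^k) - \bar z(t_c^k)$, so that the $\nu$-precision condition~\eqref{eq:deltaz} becomes a lower bound $|z(t_c^{k+})| \ge \rho(t_s^k)/\nu = (\rho_0 e^{-\sigma\gamma}/\nu)\, v(t_s^k)$. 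Substituting this lower bound into the identity above, expanding $v(t) = v_0 e^{-\sigma t}$, using $t_c^k = t_s^k + \Delta_k$, and taking logarithms, a short calculation produces $(A+\sigma)\Delta'_k \le \ln\nu + \sigma\gamma - \ln\rho_0 + A\Delta_k$.

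The last step is to eliminate $\Delta_k$ using $\Delta_k \le \beta$, which gives $A\Delta_k \le \ln(1 + 2\rho_0 e^{-\sigma\gamma})$ by the definition of $\beta$ in Assumption~\ref{Defnition11}. Plugging this in and simplifying the logarithms, $\sigma\gamma - \ln\rho_0 + \ln(1+2\rho_0 e^{-\sigma\gamma}) = \ln(e^{\sigma\gamma}/\rho_0 + 2)$, so $\Delta'_k \le [\ln\nu + \ln(2 + e^{\sigma\gamma}/\rho_0)]/(A+\sigma)$ uniformly in $k$. Inserting this into~\eqref{trate} yields the claimed bound.

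I expect the only subtle point to be the translation of Assumption~\ref{Defnition11}, phrased in terms of $|z(t_c^k) - \bar z(t_c^k)|$, into a bound on the post-jump error $|z(t_c^{k+})|$ that is needed to control the evolution of $|z(t)|$ on $[t_c^k, t_s^{k+1}]$; this is a one-line consequence of~\eqref{eq:jumpst} but should be stated explicitly. Everything else is algebraic manipulation, and in particular the uniform lower bound $\Delta'_k \ge -\ln(\rho_0 e^{-\sigma\gamma})/(A+\sigma)$ from Lemma~\ref{Lemma1} guarantees that the triggering intervals are well-defined and bounded away from zero, so that the $\limsup$ in~\eqref{trate} is finite and the inequality passes to the limit without issue.
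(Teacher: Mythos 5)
Your proposal is correct and follows essentially the same route as the paper's proof: it uses the $\nu$-precision bound of Assumption~\ref{Defnition11} together with the triggering identity $|z(t_c^{k+})|e^{A(t_s^{k+1}-t_c^k)}=v(t_s^{k+1})$, eliminates the delay via $\Delta_k\le\beta$ with $A\beta=\ln(1+2\rho_0 e^{-\sigma\gamma})$, and simplifies the logarithms to get the uniform upper bound on $\Delta'_k$ that is then substituted into~\eqref{trate}. The only cosmetic difference is that you make explicit the one-line link $z(t_c^{k+})=z(t_c^k)-\bar z(t_c^k)$, which the paper leaves implicit.
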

\begin{proof}   
  By Assumption~\ref{Defnition11}, for all $k \in \integerspos$ there
  exists a delay $\Delta_k \leq \beta$ such that
  \begin{align*}
    |z(t_c^{k+})|\ge (1/\nu) \rho_0 v(t_s^k) e^{-\sigma \gamma}.
  \end{align*}
  From the definition of the triggering time $t_s^{k+1}$ in~\eqref{needforin}, we also have
  \begin{align*}
    (1/\nu)\rho_0 e^{-\sigma \gamma} v(t_s^k) e^{A(t_s^{k+1}-t_s^{k}-
      \Delta_k)} \le v(t_s^{k+1}).
  \end{align*}
  Noting that for all $k \in \integerspos$, $\Delta_k \le \beta$,  we have
  \begin{align*}
    (1/\nu)\rho_0 e^{-\sigma \gamma} v(t_s^k) e^{A(t_s^{k+1}-t_s^{k}-
      \beta)} \le v(t_s^{k+1}).
  \end{align*}
  By dividing both sides by $ (1/\nu)\rho_0 e^{-\sigma \gamma}$
  and using the definition of triggering function, we obtain
  \begin{align*}
    e^{(A+\sigma)(t_s^{k+1}-t_s^{k})} \le \frac{1}{(1/\nu)\rho_0 e^{-\sigma \gamma}e^{-A \beta}} .
  \end{align*}
  Taking the logarithm, we get
  \begin{align}\label{interdelay}
    \Delta_k' = t_s^{k+1}-t_s^{k} \le \frac{-\ln((1/\nu)\rho_0
      e^{-\sigma \gamma})+A\beta}{A+\sigma}.
  \end{align}
  By substituting \eqref{interdelay} into~\eqref{trate}, we finally
  have
  \begin{align*}
    R_{tr} &\ge \lim_{N\rightarrow \infty}
    \frac{1}{\frac{-\ln((1/\nu)\rho_0 e^{-\sigma
          \gamma})}{A+\sigma}+\frac{A}{A+\sigma}\beta}
    \\
    \nonumber &=\frac{A+\sigma}{\ln \nu-\ln(\rho_0 e^{-\sigma
        \gamma})+\ln (1+2 \rho_0 e^{-\sigma \gamma})}
    \\
    \nonumber &=\frac{A+\sigma}{\ln\nu+\ln(2+\frac{e^{\sigma
          \gamma}}{\rho_0})}.
  \end{align*}
\end{proof}

We can now combine Lemma~\ref{thm:necc-cond-ET-g} and
Lemma~\ref{thm:necc-cond-ET} to obtain a lower bound on the
information transmission rate.

\begin{theorem}\label{THM:NECESAAPP}
  Under the assumptions of Lemma~\ref{inclusionsets},
  if~\eqref{eq:jump-upp} holds with $\nu$-precision for all $k \in
  \mathbb{N}$, then there exists a delay realization $\{ \Delta_k
  \}_{k \in \integerspos}$ and an initial condition such that
  \begin{align}\label{necessT}
    R_s \ge \frac{A+\sigma}{\ln\nu+\ln(2+\frac{e^{\sigma
          \gamma}}{\rho_0})}\max \left\{0,\log \frac{(e^{A
          \gamma}-1)}{ \rho_0 e^{-\sigma \gamma} }\right\}.
  \end{align}
\end{theorem}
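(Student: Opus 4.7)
The plan is to bound $R_s$ below by combining the two lemmas in a direct, essentially multiplicative, way. Recall
\begin{align*}
R_s = \limsup_{N \rightarrow \infty} \frac{\sum_{k=1}^{N} g(t_s^k)}{\sum_{k=1}^{N} \Delta'_k}.
\end{align*}
My first step is to invoke Lemma~\ref{thm:necc-cond-ET-g}, which gives a uniform (in $k$) lower bound on the packet size,
\begin{align*}
g(t_s^k) \;\ge\; G \;:=\; \max\!\left\{0,\log \tfrac{e^{A \gamma}-1}{ \rho_0 e^{-\sigma \gamma} }\right\}.
\end{align*}
Summing over $k=1,\dots,N$, I get $\sum_{k=1}^N g(t_s^k) \ge NG$, and hence, for every $N$,
\begin{align*}
\frac{\sum_{k=1}^{N} g(t_s^k)}{\sum_{k=1}^{N} \Delta'_k} \;\ge\; G\,\frac{N}{\sum_{k=1}^{N} \Delta'_k}.
\end{align*}
Since $G$ is a constant independent of $N$, taking $\limsup$ on both sides yields the decoupling $R_s \ge G\, R_{tr}$.

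My second step is to apply Lemma~\ref{thm:necc-cond-ET}: under Assumption~\ref{Defnition11}, there exists a delay realization $\{\Delta_k\}_{k\in \integerspos}$ (with every $\Delta_k\le\beta$) and an initial condition such that
\begin{align*}
R_{tr} \;\ge\; \frac{A+\sigma}{\ln\nu + \ln\!\left(2 + \tfrac{e^{\sigma \gamma}}{\rho_0}\right)}.
\end{align*}
For this same delay realization and initial condition, the packet-size bound from Lemma~\ref{thm:necc-cond-ET-g} still holds pointwise (it is independent of the delay realization), so both bounds can be combined simultaneously. Substituting into $R_s \ge G\,R_{tr}$ gives exactly~\eqref{necessT}.

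The only real subtlety — and hence the one place I will be careful — is the exchange of $\limsup$ with the uniform lower bound $G$: the step $\limsup_N \frac{NG}{\sum \Delta'_k} = G\,\limsup_N \frac{N}{\sum \Delta'_k}$ requires $G\ge 0$, which is automatic from the $\max\{0,\cdot\}$ in Lemma~\ref{thm:necc-cond-ET-g}. Beyond that, the proof is essentially a concatenation of the two lemmas, with the small but important observation that the delay/initial-condition pair produced by Lemma~\ref{thm:necc-cond-ET} is compatible with the pointwise packet bound of Lemma~\ref{thm:necc-cond-ET-g}, so that both estimates apply to the same trajectory simultaneously. No further calculation beyond this combination is needed.
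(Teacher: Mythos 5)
Your proposal is correct and follows exactly the route the paper intends: Theorem~\ref{THM:NECESAAPP} is stated as the direct combination of the uniform, delay-independent packet-size bound of Lemma~\ref{thm:necc-cond-ET-g} with the triggering-rate bound of Lemma~\ref{thm:necc-cond-ET} along the delay realization and initial condition that lemma provides. Your extra care about $G\ge 0$ when pulling the constant out of the $\limsup$, and about both bounds holding simultaneously for the same trajectory, only makes explicit what the paper leaves implicit.
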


\begin{remark}
  {\rm Theorem~\ref{THM:NECESAAPP} provides a necessary transmission
    rate for the exponential convergence of the estimation error to
    zero using our event-triggering strategy. By noting that the lower bound in~\eqref{necessT} does not depend on $v_0$, it is easy to check that
    as $\sigma \rightarrow 0$, this result also gives a necessary
    condition for asymptotic stability, although it does not provide
    an exponential convergence guarantee of the state. } \oprocend
\end{remark}

\subsection{Phase transition behavior}\label{sec:phasetr}\label{PHTRE12}

We now show a phase transition for the rate required for stabilization
expressed in Theorem \ref{THM:NECESAAPP}.  By combining
Lemmas~\ref{Lemma1} and \ref{thm:necc-cond-ET}, we have
\begin{align*}
  \frac{A+\sigma}{\ln \nu+\ln(2+\frac{1}{\rho_0e^{-\sigma
        \gamma}})}\le R_{tr} \le \frac{A+\sigma}{- \ln (\rho_0
    e^{-\sigma \gamma })}.
\end{align*}
\color{black}
It follows that if
%
$\rho_0 \ll e^{\sigma \gamma}/\max\{2,\nu\}$, we can neglect the value
of 2 inside the logarithm in the left-hand side, as well as $\ln\nu$,
and we have
\begin{align*}
  R_{tr}\approx\frac{A+\sigma}{-\ln(\rho_0 e^{-\sigma \gamma})}.
\end{align*} 
In this case, the necessary condition on the transmission rate can be
approximated as
\begin{align}
  \label{approxnec}
  R_s \geq \frac{A+\sigma }{\ln 2} \max \left\{0 ,1+\frac{\log (e^{A
        \gamma}-1)}{-\log (\rho_0 e^{-\sigma \gamma})}\right\}.
\end{align}
\color{black}
We use this approximation to discuss the phase transition behavior.
The approximation clearly holds for large values of the
delay upper bound $\gamma$.  It also holds for small values of $\gamma$, since in
this case both \eqref{necessT} and~\eqref{approxnec} tend to zero. For
intermediate values of $\gamma$, the approximation holds for large
values of the convergence rate $\sigma$.  The phase transition is
illustrated in Figure~\ref{fig:ness}.

\begin{figure}[t]
  \centering
  \includegraphics[height=7.5cm,width=8.2cm]{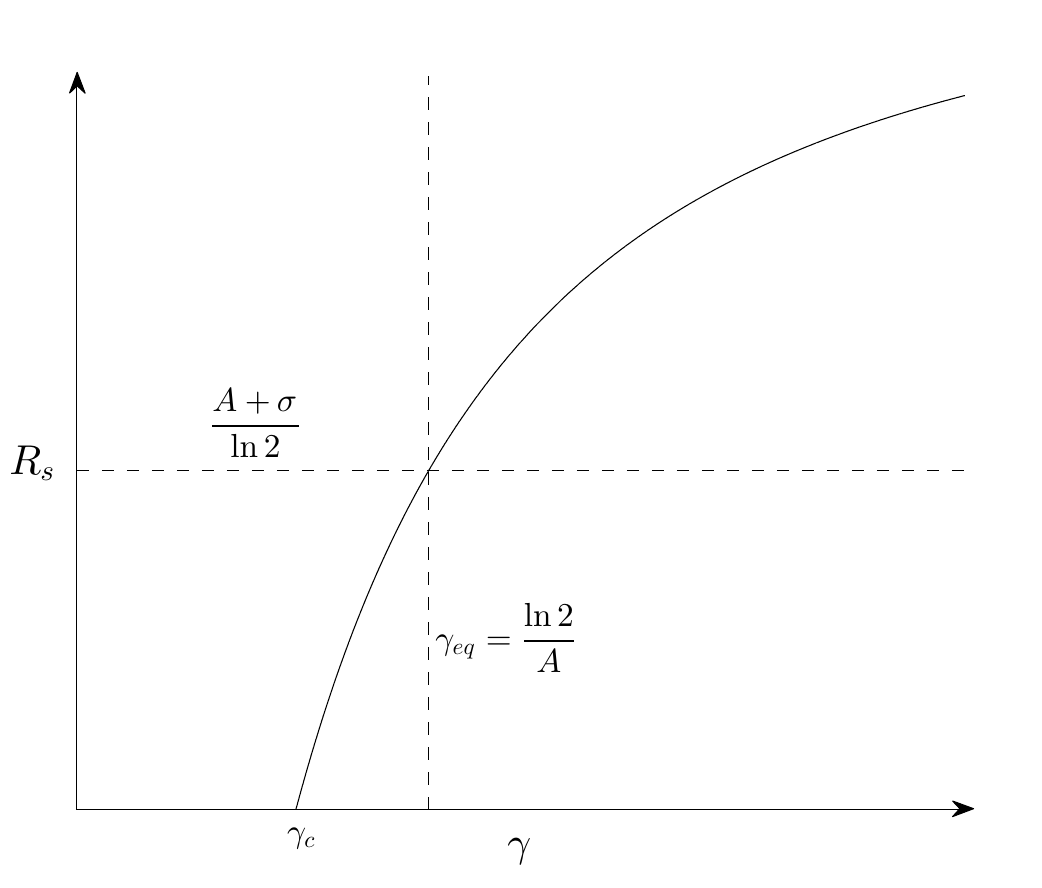}
  \caption{
  \red{Illustration of the phase transition behavior in \eqref{approxnec}. $R_s$ is measured in $\mbox{bits}/\mbox{sec}$, and $\gamma$ is measured in $\mbox{sec}$. The plot is valid for a generic  system and design parameters. In this specific example, we have chosen $A=5$, $\sigma=3$, and $\rho_0=0.7$. Consequently, $(A+\sigma)/\ln2=11.5416$, $\ln2/A=0.1386$, and $\gamma_c=0.0864$.
  }}\label{fig:ness}
  \vspace*{-1ex}
\end{figure}
\begin{figure}[t]
  \centering
  \includegraphics[height=7.5cm,width=8.2cm]{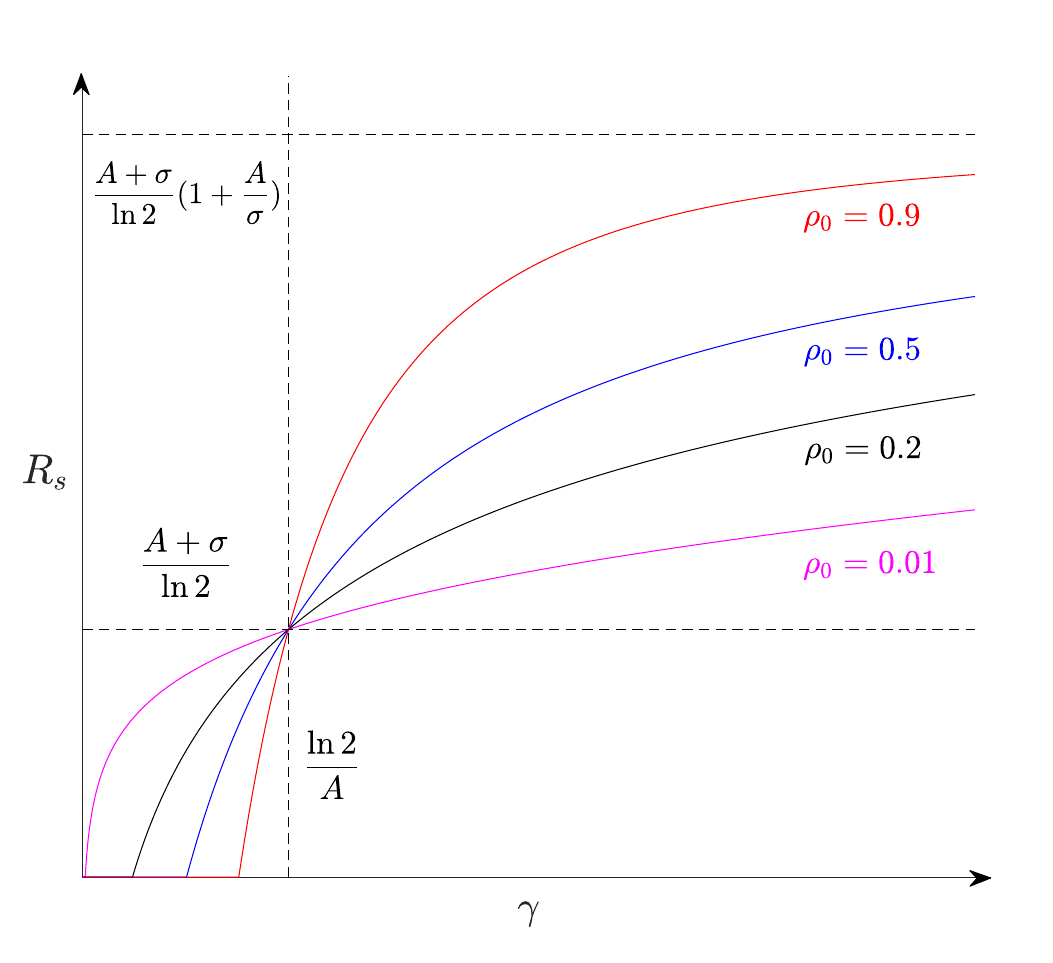}
  \caption{
  \red{Illustration of the phase transition behavior in~\eqref{approxnec}  for different values of $\rho_0$. $R_s$ is measured in $\mbox{bits}/\mbox{sec}$, and $\gamma$ is measured in $\mbox{sec}$. The plots are  valid for a generic  system and design parameters. In this  specific example, we have chosen $A=1$, and $\sigma=0.5$. Therefore, $(A+\sigma)/\ln2=2.1640$, $\ln2/A=0.6931$, $\frac{A+\sigma}{\ln2}(1+\frac{A}{\sigma})=6.4921$}.} \label{fig:difrho}
  \vspace*{-1ex}
\end{figure}

We make the following observations. For small values of~$\gamma$, the
amount of timing information carried by the triggering events is
higher than what is needed to stabilize the system and the value of
$R_s$ is zero. This means that if the delay is sufficiently small,
then only a positive transmission rate is required to track the state
of the system and the controller can successfully stabilize the system
by receiving a single bit of information at every triggering event.
This situation persists until a critical value $\gamma = \gamma_c$ is
reached. This critical value is the solution of the equation
\begin{align*}
  e^{A \gamma} - \rho_0 e^{-\sigma \gamma}=1.
\end{align*}
For this level of delay, the timing information of the triggering
events becomes so much out of date that the transmission rate must
begin to increase.

When $\gamma$ reaches the equilibrium point $\gamma_{eq}=\ln 2/A$,
which equals the inverse of the intrinsic entropy rate of the system,
the timing information carried by the triggering events compensates
exactly the loss of information due to the delay introduced by the
communication channel. This situation is analogous to having no delay,
but also no timing information. It follows that in this case the
required transmission rate matches the access rate in
Theorem~\ref{thm:necc-access-rate}, and we have $R_s= (A+ \sigma)/ \ln
2$. 

When $\gamma$ is increased even further, then the timing information
carried by event triggering is excessively out of date and cannot
fully compensate for the channel's delay. The required transmission
rate then exceeds the access rate imposed by the data-rate theorem. In
this case, a more precise estimate of the state must be sent at every
triggering time to compensate for the larger delay.  Another
interpretation of this behavior follows by considering the definition
$H_{\rho(t_s)}$ in~\eqref{entropyH}.  The value $\gamma =
\gamma_{eq}=\ln2/A$ marks a transition point for $H_{\rho(t_s)}$ from
negative to positive values.  For $\gamma>\gamma_{eq}$ event
triggering does not supply enough information and $H_{\rho(t_s)}$
presents a positive information balance in terms of the number of bits
required to cover the uncertainty set.  On the other hand, for
$\gamma<\gamma_{eq}$, event triggering supplies more than
enough information, and $H_{\rho(t_s)}$ presents a negative
information balance. We can then think of event triggering as a
``source'' supplying information, the controller as a ``sink''
consuming information, and $H_{\rho(t_s)}$ as measuring the balance
between the two, indicating whether additional information is needed
in terms of quantized observations sent through the channel.

Finally, Figure~\ref{fig:difrho} illustrates the phase transition for
different values of $\rho_0$.  For $\gamma<\gamma_{eq}$, since
according to~\eqref{consecu1} smaller values of $\rho_0$ imply fewer
triggering events, it follows that curves associated to smaller values
of $\rho_0$ must have larger transmission rates to compensate for the
lack of timing information.  On the other hand, for
$\gamma>\gamma_{eq}$ the situation is reversed. The timing
information carried by the triggering events is now completely
exhausted by the delay, and the controller relies only on the state
information contained in the quantized packets. Since, according
to~\eqref{g'eq}, smaller values of $\rho_0$ imply larger packets sent
through the channel and, for each value of the delay, the information
in the larger packets becomes out of date at a slower rate than that
in the smaller packets, it follows that in this case curves associated
to smaller values of $\rho_0$ correspond to smaller transmission
rates.  Finally, we observe that all curves have the same asymptotic
behavior for large values of $\gamma$, which is independent of
$\rho_0$. This occurs because as $\gamma$ increases, more information
needs to be sent through the channel and also the triggering rate
decreases.  Taking both effects into account yields the asymptotic
value of the transmission
rate~$\frac{A+\sigma}{\ln2}(1+\frac{A}{\sigma})$.

\begin{remark} {\rm The value of  $\gamma_c$ is a threshold
    distinguishing whether~\eqref{necessT} is zero or strictly
    positive. This threshold tends to $\gamma_{eq} =\ln
    2/A$ as $\sigma \rightarrow 0$ and $\rho_0 \rightarrow 1$. This is
    consistent with the fact that in this case there is only an
    asymptotic convergence guarantee (not an exponential one), and
    when the delay upper bound $\gamma$ is at most the inverse of
    entropy rate of the system only a positive transmission rate is
    necessary for stabilization.
    }  \oprocend
\end{remark}

\subsection{Sufficient condition on the transmission
  rate}\label{sec:sufresult}

We now determine a sufficient transmission rate for the exponential
convergence of the state estimation error using the event-triggering
strategy described in Section~\ref{sec:controller-dynamics}. 

In our strategy, we let the sensor send a packet
consisting of the sign of $z(t_s)$ and a quantized version of $t_s$
to the controller.  Using the bound \eqref{gammma}, and the decoded
packet, the controller constructs $q(t_s)$, a quantized version of
$t_s$.  The controller then estimates $z(t_c)$ as follows
\begin{align}\label{ctrlapp1}
   \bar{z}(t_c)=\text{sign}(z(t_s)) v(q(t_s))e^{A(t_c-q(t_s))}.
\end{align}
The next result provides a bound on the error in the time quantization
that guarantees that the requirements of the design are satisfied.

\begin{lemma}\label{lemmasu}
Under the assumptions of Lemma~\ref{inclusionsets},  using~\eqref{ctrlapp1}, if 
   \begin{align}\label{Qua}
   |t_s-q(t_s)| \le \frac{1}{A+\sigma} \ln(1+\rho_0 e^{- (\sigma+A) \gamma})
\end{align}
then~\eqref{eq:jump-upp} holds.
\end{lemma}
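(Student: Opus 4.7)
The plan is to compute $|z(t_c)-\bar z(t_c)|$ in closed form and show that the hypothesis on $|t_s-q(t_s)|$ makes it no larger than $\rho(t_s)=\rho_0 e^{-\sigma\gamma}v(t_s)$.

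First, I would pin down the two quantities being compared. At a triggering time, $|z(t_s)|=v(t_s)$ and $\dot z=Az$ between updates, so
\begin{align*}
z(t_c) = \operatorname{sign}(z(t_s))\, v(t_s)\, e^{A(t_c-t_s)},
\end{align*}
while by definition \eqref{ctrlapp1},
\begin{align*}
\bar z(t_c) = \operatorname{sign}(z(t_s))\, v(q(t_s))\, e^{A(t_c-q(t_s))}.
\end{align*}
Substituting $v(t)=v_0 e^{-\sigma t}$ and factoring $v_0 e^{At_c}$ from both terms, the difference collapses to
\begin{align*}
|z(t_c)-\bar z(t_c)| = v(t_s)\, e^{A(t_c-t_s)}\,\bigl|1 - e^{(A+\sigma)(t_s-q(t_s))}\bigr|.
\end{align*}

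Next, I would bound the two remaining factors. Since $t_c-t_s=\Delta_k\le\gamma$ by \eqref{gammma}, the exponential prefactor is at most $e^{A\gamma}$. For the bracket, set $\epsilon=t_s-q(t_s)$ and use the elementary inequality $1-e^{-x}\le e^{x}-1$ valid for $x\ge 0$, which gives $|1-e^{(A+\sigma)\epsilon}|\le e^{(A+\sigma)|\epsilon|}-1$ regardless of the sign of $\epsilon$. The hypothesis \eqref{Qua} says exactly that $(A+\sigma)|\epsilon|\le \ln(1+\rho_0 e^{-(\sigma+A)\gamma})$, so
\begin{align*}
\bigl|1-e^{(A+\sigma)\epsilon}\bigr| \le \rho_0\, e^{-(\sigma+A)\gamma}.
\end{align*}

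Combining these two bounds yields
\begin{align*}
|z(t_c)-\bar z(t_c)| \le v(t_s)\, e^{A\gamma}\cdot \rho_0\, e^{-(\sigma+A)\gamma} = \rho_0\, e^{-\sigma\gamma}\, v(t_s) = \rho(t_s),
\end{align*}
which is exactly \eqref{eq:jump-upp}. The only mildly delicate point is the two-sided bound on $|1-e^{(A+\sigma)\epsilon}|$; in the paper's setup $q(t_s)$ could a priori lie on either side of $t_s$, so it is important that the elementary inequality above makes the positive-$\epsilon$ direction the worst case and takes care of both signs uniformly. Everything else is a straightforward substitution using $v(t)=v_0 e^{-\sigma t}$ and the delay bound $\Delta_k\le\gamma$.
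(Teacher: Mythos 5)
Your proof is correct and follows essentially the same route as the paper's: both derive the closed-form expression $|z(t_c)-\bar z(t_c)| = v(t_s)\,e^{A(t_c-t_s)}\,|1-e^{(A+\sigma)(t_s-q(t_s))}|$, bound the delay factor by its worst case $e^{A\gamma}$, and reduce the claim to $|1-e^{(A+\sigma)(t_s-q(t_s))}|\le \rho_0 e^{-(\sigma+A)\gamma}$. The only cosmetic difference is in handling the sign of $t_s-q(t_s)$: the paper takes the minimum of $|\ln(1-x')|$ and $\ln(1+x')$ with $x'=\rho_0 e^{-(\sigma+A)\gamma}$, while you use the equivalent elementary inequality $1-e^{-x}\le e^{x}-1$ to show the positive direction is the worst case.
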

\begin{proof}
Using~\eqref{ctrlapp1}, it follows that
 \begin{align}\label{eq:diffzzbar}
|z(t_c)-&\bar{z}(t_c)|\\\nonumber&=v(t_s)e^{A(t_c-t_s)}\left|1-\frac{v(q(t_s))}{v(t_s)}e^{A(t_s-q(t_s))}\right|
\\\nonumber&=v(t_s)e^{A(t_c-t_s)}\left|1-\frac{v_0 e^{-\sigma q(t_s)}}{v_0 e^{-\sigma t_s}}e^{A(t_s-q(t_s))}\right|
\\\nonumber&=v(t_s)e^{A(t_c-t_s)}\left|1-e^{(A+\sigma)(t_s-q(t_s))}\right|.
\end{align}
As a consequence, \eqref{eq:jump-upp} may be expressed as
\begin{align*}
|1-e^{(A+\sigma)(t_s-q(t_s))}| \le \rho_0 e^{-\sigma \gamma} e^{-A(t_c-t_s)}.
\end{align*}
The smallest possible value  of $e^{-A(t_c-t_s)}$ for
$(t_c - t_s) \in [0, \gamma]$ is $e^{-A \gamma}$.
%
Therefore, by ensuring
\begin{align}\label{eq:ts-quant-bound}
  \left|1-e^{(A+\sigma)(t_s-q(t_s))}\right| \le \rho_0 e^{- (\sigma+A) \gamma},
\end{align}
we can also ensure~\eqref{eq:jump-upp}. The condition in
~\eqref{eq:ts-quant-bound} can be rewritten as
 \begin{align*}
 1-\rho_0 e^{- (\sigma+A) \gamma}\le e^{(A+\sigma)(t_s-q(t_s))} \le 1+\rho_0 e^{- (\sigma+A) \gamma}.
 \end{align*}
Taking logarithms and dividing by $(A+\sigma)$, we obtain
 \begin{align}\label{comp1}
 \frac{1}{A+\sigma} \ln(1-x') \le t_s-q(t_s) \le \frac{1}{A+\sigma} \ln(1+x'),
  \end{align}
 where $x'=\rho_0 e^{- (\sigma+A) \gamma}$.
It follows that to satisfy~\eqref{eq:jump-upp} for all delay values it is enough that
  \begin{align*}
  |t_s-q(t_s)| \le \min\{|\frac{1}{A+\sigma} \ln(1-x')|,|\frac{1}{A+\sigma} \ln(1+x')|\}.
  \end{align*}
The result now follows.
\end{proof}

The next result presents a sufficient transmission rate, along with the design that meets it. 
\green{
\begin{theorem}\label{thm:suf-cond-ET} 
Under the assumptions of Lemma~\ref{inclusionsets}, 
if the state estimation error satisfies  $|z(0)|<v_0$, then
for any information transmission rate
  \begin{align}\label{Sufi}
    R_s \ge
    \frac{A+\sigma}{-\ln(\rho_0 e^{-\sigma \gamma})}
    \max\left\{0,1+\log\frac{b\gamma (A+\sigma)}{\ln(1+\rho_0 e^{-
          (\sigma+A) \gamma})}\right\},
  \end{align}
  where $b>1$,
there exists a quantization policy that achieves~\eqref{eq:jump-upp} 
for all $k \in \mathbb{N}$ 
(and consequently 
$|z(t)| \le v_0 e^{(A+\sigma) \gamma} e^{-\sigma t}$). 
\end{theorem}
}
\begin{proof}
  Our proof strategy is as follows. We design a quantizer to construct
  a packet of length $g(t_s)$ that the sensor sends to the
  controller. Using this packet, the decoder reconstructs the
  quantized version $q(t_s)$ of $t_s$ satisfying~\eqref{Qua}. The
  result then follows from Lemma~\ref{lemmasu} and quantifying the
  associated transmission rate.

  In our construction, the first bit of the packet determines the sign
  of $z(t_s)$, i.e., whether $z(t_s)=+ v(t_s)$ or
  $z(t_s)=-v(t_s)$. For quantizing $t_s$, we first divide the whole
  positive time line in sub-intervals of length $b\gamma$. Recall that
  the controller receives a packet at time $t_c$, and $t_s \in
  [t_c-\gamma,t_c]$. Noting that $b\gamma>\gamma$, upon the reception
  of the packet at time $t_c$ the decoder identifies two consecutive
  sub-intervals of length $b\gamma$ that $t_s$ can belong to --- the
  second bit of the packet is $\modulo \left( \floor{ \frac{t_s}{b
        \gamma} }, 2 \right) $, which informs the decoder that $t_s
  \in [ \iota b \gamma, (\iota+1) b \gamma]$ for some fixed
  $\iota$. The encoder divides this interval uniformly into
  $2^{g(t_s)-2}$ sub-intervals, one of which contains $t_s$. After
  receiving the packet, the decoder determines the correct
  sub-interval and chooses $q(t_s)$ as the middle point of it. With
  this strategy, we have
  \begin{align}\label{Quntrulesuff1}
    |t_s-q(t_s)| \leq\frac{b\gamma}{2^{g(t_s)-1}}.
  \end{align}
  Hence, from Lemma~\ref{lemmasu}, it is enough to ensure
  \begin{align}\label{eq:condlength1}
    \frac{b\gamma}{2^{g(t_s)-1}} \le \frac{1}{A+\sigma} \ln(1+\rho_0
    e^{- (\sigma+A) \gamma}),
  \end{align}
  to guarantee that~\eqref{eq:jump-upp} holds. This is equivalent to
  \begin{align}
    \label{lowerbndgsuf}
    g(t_s) \ge \max\left\{0,1+\log\frac{b\gamma
        (A+\sigma)}{\ln(1+\rho_0 e^{- (\sigma+A) \gamma})}\right\}.
  \end{align}
  The characterization~\eqref{Sufi} of the transmission rate now
  follows from using this bound and the uniform upper bound on the
  triggering rate~\eqref{consecu1}.
\end{proof}

Theorem~\ref{thm:suf-cond-ET} ensures the exponential convergence of
the state estimation error. The following result shows
that~\eqref{Sufi} is sufficient for asymptotic stabilizability when
employing a linear controller.

\begin{corollary}\label{Stability}
{\green{Under the assumptions of Theorem~\ref{thm:suf-cond-ET},~\eqref{Sufi} is also a
  sufficient condition for asymptotic stabilizability.}}
\end{corollary}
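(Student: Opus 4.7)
The plan is to exploit the exponential bound $|z(t)| \le v_0 e^{(A+\sigma)\gamma} e^{-\sigma t}$ guaranteed by Theorem~\ref{thm:suf-cond-ET} and feed it into a closed-loop analysis driven by a linear certainty-equivalence controller. Since $(A,B)$ is stabilizable and $A>0$, we must have $B\neq 0$, so there exists $K\in\real$ with $A+BK<0$; set $\lambda:=-(A+BK)>0$. The controller applies $u(t)=K\hat x(t)$, and the plant dynamics~\eqref{syscon} can be rewritten using $\hat x=x-z$ as
\begin{align*}
\dot x(t) = (A+BK)x(t) - BK\,z(t) = -\lambda\,x(t) - BK\,z(t).
\end{align*}

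Next, I would treat $-BK\,z(t)$ as an exponentially decaying disturbance and invoke variation of parameters to obtain
\begin{align*}
x(t) = e^{-\lambda t} x(0) - BK\int_0^t e^{-\lambda(t-s)} z(s)\,ds.
\end{align*}
Plugging the bound on $|z(s)|$ from Theorem~\ref{thm:suf-cond-ET} and letting $C:=v_0 e^{(A+\sigma)\gamma}$, I get
\begin{align*}
|x(t)| \le e^{-\lambda t}|x(0)| + |BK|\,C\,e^{-\lambda t}\int_0^t e^{(\lambda-\sigma)s}\,ds.
\end{align*}
Evaluating the integral in the three cases $\lambda>\sigma$, $\lambda<\sigma$, and $\lambda=\sigma$ yields upper bounds that are linear combinations of $e^{-\lambda t}$, $e^{-\sigma t}$, or $t\,e^{-\lambda t}$, each of which vanishes as $t\to\infty$. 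Hence $\lim_{t\to\infty}x(t)=0$, which is asymptotic stabilizability.

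The argument also implicitly requires that $u(t)=K\hat x(t)$ is well-defined between reception events, which follows because $\hat x$ is absolutely continuous (evolving via~\eqref{sysest} on inter-reception intervals and jumping by $\bar z(t_c)$ at reception times~\eqref{eq:jumpst}), and its jumps are bounded by Theorem~\ref{thm:suf-cond-ET}. The step I expect to be the main obstacle (although more conceptual than technical) is verifying that replacing the generic controller implicitly assumed in Theorem~\ref{thm:suf-cond-ET} by the specific linear law $u=K\hat x$ does not interfere with the quantization construction: since the quantizer in the proof of Theorem~\ref{thm:suf-cond-ET} quantizes only $t_s$ and $\operatorname{sign}(z(t_s))$, and both the sensor and controller can reproduce $\hat x$ (and therefore $u$) from this information as noted in the discussion following~\eqref{sysest}, the estimation error bound carries over unchanged. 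Combining this observation with the input-to-state-style bound on $|x(t)|$ completes the proof.
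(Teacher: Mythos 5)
Your proposal is correct and follows essentially the same route as the paper: apply the certainty-equivalence linear law, rewrite the closed loop as a Hurwitz system driven by $z(t)$, use variation of parameters, and conclude $x(t)\to 0$ from the exponential decay of the estimation error guaranteed by Theorem~\ref{thm:suf-cond-ET}. Your explicit case analysis of the convolution integral (and the remark about the sensor reconstructing $\hat{x}$ under the linear law) just fills in details the paper leaves implicit.
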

\begin{proof}
  With  $u(t)=-K \hat{x}(t)$, we can rewrite~\eqref{syscon} as
\begin{align*}
  \dot{x}(t)=(A-BK)x(t)+BKz(t).
\end{align*}
  As a consequence, we have
   \begin{align*}
  x(t)=e^{(A-BK)t}x(0)+e^{(A-BK)t} \int_{0}^{t}e^{-(A-BK)\tau}BKz(\tau)d\tau.
  \end{align*}
  According to Theorem~\ref{thm:suf-cond-ET},~\eqref{Sufi} is
  sufficient to guarantee $\lim_{t \rightarrow \infty}z(t)=0$.
  {\green{Since $B \neq 0$ one can choose $K$ such that $A-BK<0$,}}
  and it follows that criterion~\eqref{Sufi} is also sufficient
  for $\lim_{t \rightarrow \infty}x(t)=0$. Stability can also be
  guaranteed from the above expression.
\end{proof}

It should be clear
that if the quantization policy designed for establishing
Theorem~\ref{thm:suf-cond-ET} satisfies Assumption~\ref{Defnition11}, then
the number of bits transmitted at each triggering time is
finite. We conclude this section by providing a  condition
under which the designed policy satisfies
Assumption~\ref{Defnition11}.

\begin{theorem}\label{PROBDEF1}
  Under the assumptions of Lemma~\ref{inclusionsets}, let
  $\nu \ge 2$,  and  let the number of bits in each transmitted packet be a constant $g(t_s^k) = g$. If $g$  satisfies  the lower
  bound~\eqref{lowerbndgsuf} and the upper bound
  \begin{align}\label{gtsupperbound}
    g \le \log \frac{b \gamma (A+\sigma)}{ \left|\ln
    \left(1-\frac{1}{(\nu-1) \left(2+\dfrac{1}{\rho_0e^{-\sigma\gamma}}\right)}\right)\right| },
  \end{align}
 and 
  \begin{equation}\label{eq:expansion-cond}
    \frac{ 1 - e^{-(A+\sigma) \frac{\delta}{2}} }{ 1 - e^{-(A+\sigma)
        \frac{\delta}{4}} } \geq e^{(A+\sigma) \frac{3\delta}{4}} ,
  \end{equation}
  where $\delta =  b \gamma / 2^{g - 2} $, then the
  quantization policy used in Theorem~\ref{thm:suf-cond-ET} satisfies
  Assumption~\ref{Defnition11} at every triggering time.
\end{theorem}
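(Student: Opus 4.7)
My plan is to verify Assumption~\ref{Defnition11} by explicitly constructing an initial condition $x(0)$ and a delay sequence $\{\Delta_k\}_{k\in\integerspos}$ with $\Delta_k\le\beta$ such that, under the quantization policy of Theorem~\ref{thm:suf-cond-ET}, the lower bound~\eqref{eq:deltaz} holds at every triggering event. The first task is to translate~\eqref{eq:deltaz} into a condition on the time-quantization error $|t_s^k-q(t_s^k)|$. Plugging the policy into~\eqref{eq:diffzzbar} gives
\begin{align*}
|z(t_c^k)-\bar{z}(t_c^k)| \;=\; v(t_s^k)\, e^{A\Delta_k}\,\bigl|1-e^{(A+\sigma)(t_s^k-q(t_s^k))}\bigr|,
\end{align*}
so~\eqref{eq:deltaz} becomes $e^{A\Delta_k}\bigl|1-e^{(A+\sigma)(t_s^k-q(t_s^k))}\bigr|\ge \rho_0 e^{-\sigma\gamma}/\nu$. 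Taking $\Delta_k=0$ (the most stringent choice) and solving for $|t_s^k-q(t_s^k)|$, a short algebraic manipulation reveals that the upper bound~\eqref{gtsupperbound} is exactly the statement $\delta/2\ge \delta^\star$, where $\delta=b\gamma/2^{g-2}$ is the length of one sub-interval from the proof of Theorem~\ref{thm:suf-cond-ET} and $\delta^\star$ is the threshold quantization error needed to meet~\eqref{eq:deltaz}. Hence it suffices to arrange $|t_s^k-q(t_s^k)|\ge \delta/2$ at every triggering event.

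For the base case, I exploit the continuous dependence of $t_s^1$ on $x(0)$ through the dynamics~\eqref{syscon} and the triggering rule~\eqref{eq:ets}: sweeping $x(0)$ over an appropriate range varies $t_s^1$ continuously over an interval of length exceeding $\delta$, so I can place $x(0)$ to guarantee that $t_s^1$ falls exactly on the boundary of one of the sub-intervals of length $\delta$, yielding $|t_s^1-q(t_s^1)|=\delta/2$.

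The inductive step is the technical core. Assume $|t_s^k-q(t_s^k)|=\delta/2$; I must select $\Delta_k\in[0,\beta]$ so that $t_s^{k+1}$ also lands on a sub-interval boundary. Solving~\eqref{needforin} for $t_s^{k+1}$ gives an explicit expression in terms of $t_c^k=t_s^k+\Delta_k$ and the post-jump value $|z(t_c^{k+})|$ produced by the quantizer; both depend continuously and monotonically on $\Delta_k$. As $\Delta_k$ ranges over a sub-window of $[0,\beta]$, the image $t_s^{k+1}$ sweeps a continuous interval, and a computation using~\eqref{eq:diffzzbar} and the formula for $t_s^{k+1}$ shows that the length of this image interval is governed precisely by the ratio $(1-e^{-(A+\sigma)\delta/2})/(1-e^{-(A+\sigma)\delta/4})$, while the relative drift of the quantizer grid contributes a worst-case factor $e^{(A+\sigma)3\delta/4}$. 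Inequality~\eqref{eq:expansion-cond} is exactly the statement that this image interval is at least as long as $\delta$, so a feasible $\Delta_k\le\beta$ can always be chosen to place $t_s^{k+1}$ on a boundary, closing the induction.

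The main obstacle will be the inductive step: writing the reachable set of $t_s^{k+1}$ as $\Delta_k$ varies and extracting the sharp length bound that matches~\eqref{eq:expansion-cond}. This requires careful tracking of how the time-quantization error propagates through the jump strategy~\eqref{eq:jumpst} and the definition of $t_s^{k+1}$ in~\eqref{needforin}. Once this geometric bound is in place, the lower bound on $g$ in~\eqref{lowerbndgsuf} ensures that~\eqref{eq:jump-upp} still holds (so that the triggering structure is well defined), the upper bound~\eqref{gtsupperbound} ensures that the prescribed quantization error meets the $\nu$-precision threshold, and Assumption~\ref{Defnition11} follows for all $k\in\integerspos$.
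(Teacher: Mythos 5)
There is a genuine gap, and it sits exactly where you predicted the technical core would be: the inductive step. You propose to choose $\Delta_k\in[0,\beta]$ so that $t_s^{k+1}$ lands where you want, but under this triggering and jump scheme the next triggering time does not depend on $\Delta_k$ at all. From~\eqref{eq:diffzzbar} the post-jump error has magnitude $v(t_s^k)e^{A\Delta_k}\bigl|1-e^{(A+\sigma)(t_s^k-q(t_s^k))}\bigr|$, and it then grows like $e^{A(t-t_c^k)}$ until it meets $v(t)$; the factor $e^{A\Delta_k}$ cancels exactly against the shortened growth interval, so $\Delta'_k$ is a function of $t_s^k-q(t_s^k)$ only (this is precisely~\eqref{eq:h} in the paper's proof, where it is stated explicitly that $t_s^{k+1}$ depends only on $t_s^k$ and not on $\Delta_k$). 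Consequently the delay realization gives you no steering freedom over the sequence $\{t_s^k\}$; the only freedom is the initial condition. The paper's proof therefore analyzes the one-dimensional map $y_k=t_s^k-q(t_s^k)\mapsto y_{k+1}=\mathcal{H}(y_k)$ induced by the quantizer grid, uses~\eqref{eq:expansion-cond} to show that on a subinterval of $[-\delta/2,-\delta/4]$ a branch of $\mathcal{H}$ is continuous, one-to-one and onto $[-\delta/2,\delta/2]$, so its inverse is a contraction and $\mathcal{H}$ has a fixed point there, and then picks $z(0)$ so that $y_1$ equals that fixed point; the sanity condition then holds at every triggering time, and the delay enters only through the separate claim that for $y_k\in[-\delta/2,-\delta/4]$ there exists $\Delta_k$ close to $\beta$ (hence $\leq\beta$) for which~\eqref{eq:deltaz} holds. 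Your plan of forcing $|t_s^k-q(t_s^k)|=\delta/2$ exactly at every step is also untenable: the quantization error can never exceed $\delta/2$ in magnitude, so you are asking for a knife-edge equality that you cannot propagate once the delay is seen to have no influence on $t_s^{k+1}$.

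A secondary miscalibration: your reduction of~\eqref{gtsupperbound} assuming $\Delta_k=0$ is not what the bound encodes. With $\Delta_k=0$ you would need $\bigl|1-e^{(A+\sigma)(t_s^k-q(t_s^k))}\bigr|\geq\rho_0 e^{-\sigma\gamma}/\nu$, whereas~\eqref{gtsupperbound} together with $t_s^k-q(t_s^k)\leq-\delta/4$ only guarantees the weaker lower bound $\rho_0 e^{-\sigma\gamma}/\bigl((\nu-1)(1+2\rho_0 e^{-\sigma\gamma})\bigr)$; the missing factor is exactly what the paper recovers by taking $\Delta_k$ in $[\beta-\ln(\nu/(\nu-1)),\beta]$, where $e^{A\beta}=1+2\rho_0 e^{-\sigma\gamma}$. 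So the admissible delays in Assumption~\ref{Defnition11} are exploited at the large end, not at zero, and the relevant quantization-error condition is the signed, two-sided one in~\eqref{sanitycheck}, not $|t_s^k-q(t_s^k)|\geq\delta/2$. To repair your argument you would essentially have to reconstruct the paper's fixed-point analysis of $\mathcal{H}$ driven solely by the choice of $z(0)$.
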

\begin{proof}
The proof follows from the following two claims.

  \emph{Claim (a):} For all $k \in \integerspos$, if $t_s^k$ satisfies
  \begin{align}\label{sanitycheck}
    - \frac{ \delta }{ 2 } = - \frac{b\gamma}{2^{g-1}} \le t_s^k -
    q(t_s^k) \le - \frac{b\gamma}{2^{g}} = - \frac{ \delta }{ 4 } ,
  \end{align}
  then there exists a delay $\Delta_k \leq \beta$ such
  that~\eqref{eq:deltaz} is satisfied.

  \emph{Claim (b):} The sequence of transmission times $\{t_s^k\}$ is
  uniquely determined by the initial condition $z(0)$ and there exists
  a $z(0)$ such that for each $k \in \integerspos$, $t_s^k$
  satisfies~\eqref{sanitycheck}.

   We first
  prove Claim~(a). Note that when the sensor transmits $g$
  bits, lower bounded by~\eqref{lowerbndgsuf}, the upper bound on the
  quantization error~\eqref{Quntrulesuff1} holds and
  thus~\eqref{sanitycheck} is well defined. From~\eqref{sanitycheck}
  and~\eqref{gtsupperbound}, we have
  \begin{align}\label{eq:mjkh1}
    t_s^k - q(t_s^k) \le \frac{1}{A+\sigma} \ln \left(1 - \frac{1}{(\nu-1)
    (2+\frac{1}{\rho_0 e^{-\sigma\gamma}})}\right),
  \end{align}
  where we have used the fact that $\nu \geq 2$ to simplify the
  absolute value. We rewrite this inequality as
  \begin{align*}
    1-e^{(A+\sigma)(t_s^k-q(t_s^k))} 
    &\geq \frac{\rho_0 e^{-\sigma \gamma}}{(\nu-1) (1+2 \rho_0 e^{-\sigma
      \gamma})} > 0.
  \end{align*}
  Thus, from~\eqref{eq:diffzzbar}, we see that
  \begin{align*}
    | z(t_c^k) - \bar{z}(t_c^k) | 
    &\geq v(t_s^k) e^{A \Delta_k} \frac{\rho_0 e^{-\sigma
      \gamma}}{(\nu-1) (1+2 \rho_0 e^{-\sigma \gamma})} 
    \\
    &\geq \frac{ \rho(t_s^k) }{ \nu } e^{A (\Delta_k - \beta + \ln(
      \frac{\nu}{\nu -1} ) ) } 
    \\
    &\geq \frac{ \rho(t_s^k) }{ \nu }, \quad \forall \Delta_k \in \left[ \beta - \ln \left(
      \frac{\nu}{\nu -1} \right) , \beta \right] ,
  \end{align*}
  where in the second inequality, we have used the definition of
  $\rho(t_s^k)$ in~\eqref{eq:jump-upp}. This proves Claim~(a). 

  We now prove Claim~(b). First,  we need
  to determine the dependence of $t_s^{k+1}$ on $t_s^k$ and
  $\Delta_k$. Recall the triggering rule~\eqref{eq:ets}, which we
  express as $
    v(t_s^k) e^{-\sigma \Delta'_k}
    = | z(t_c^{k+})| e^{A(\Delta'_k - \Delta_k)}
    = v(t_s^k) | 1 - e^{(A+\sigma)(t_s^k - q(t_s^k))}
      | e^{A\Delta'_k}$,  where we have used the fact $\Delta'_k = t_s^{k+1} - t_s^k$
  and~\eqref{eq:diffzzbar}. On simplification, we obtain
  \begin{align}\label{eq:h}
    \Delta'_k = \mathfrak{h}(t_s^k - q(t_s^k)) ,
  \end{align}
  \red{where, for convenience, we have defined $\mathfrak{h}(t):= - \frac{1}{A + \sigma} \ln ( | 1 - e^{(A+\sigma)t}| ) $}. Notice that $t_s^{k+1}$  depends only on $t_s^k$ and not on  $\Delta_k$ and. We show next that $t_s^k - q(t_s^k)$ uniquely determines $t_s^{k+1} - q(t_s^{k+1})$.

  To show this, recall that according to the proof of
  Theorem~\ref{thm:suf-cond-ET}, the quantization policy has the
  encoder divide the interval $[ \iota b \gamma, (\iota+1) b \gamma]$
  for some fixed $\iota$ uniformly into $2^{g-2}$ sub-intervals, one
  of which includes $t_s^k$. The decoder chooses as $q(t_s^k)$ the
  middle point of the sub-interval that contains $t_s^k$. Thus, we have
  \begin{equation}
    \label{eq:q}
    q(t) = \left \lfloor \frac{ t }{ \delta } \right \rfloor \delta +
    \frac{ \delta }{ 2 } , \quad \delta = \frac{ b \gamma }{ 2^{g-2} } .
  \end{equation}
  Letting $y_k = t_s^k - q(t_s^k)$, we obtain
  \begin{align*}
    y_{k+1}
    &= t_s^k + \Delta'_k - q(t_s^k + \Delta'_k)
    \\
    &= y_k + \left \lfloor \frac{ t_s^k }{ \delta } \right \rfloor \delta +
      \Delta'_k - \left \lfloor \frac{ y_k + \left \lfloor \frac{
      t_s^k }{ \delta } \right \rfloor \delta + \frac{ \delta }{ 2 } +
      \Delta'_k }{ \delta } \right \rfloor \delta
    \\
    &= y_k + \mathfrak{h}(y_k) - \left \lfloor \frac{ y_k + \frac{ \delta }{ 2 }
      + \mathfrak{h}(y_k) }{ \delta } \right
      \rfloor \delta =: \mathcal{H}(y_k) ,
  \end{align*}
  where in the second step we have used $t_s^k = y_k + q(t_s^k)$
  and~\eqref{eq:q}, and in the third step we have
  used~\eqref{eq:h}. From the conditions on $g$, we know
  that~\eqref{Quntrulesuff1} is satisfied and hence $\mathcal{H}$ is a map from
  the interval $[- \frac{ \delta }{ 2 }, \frac{ \delta }{ 2 } ]$ onto
  itself. We also notice that $\mathcal{H}$ is a piecewise continuous
  function. In fact, it is easy to verify that on
  $[- \frac{ \delta }{ 2 }, 0 )$, the function is piecewise strictly
  increasing. Further, note that if $\mathcal{H}$ is discontinuous at $w < 0$,
  then the left limit of $\mathcal{H}$ at $w$ is $\delta/2$ while the right
  limit of $\mathcal{H}$ at $w$ is $- \delta/2$.

  Next, \eqref{eq:expansion-cond} implies that
  \begin{equation*}
    \ln( 1 - e^{-(A+\sigma) \frac{\delta}{2}} ) - \ln( 1 - e^{-(A+\sigma)
      \frac{\delta}{4}} ) \geq (A+\sigma) \frac{3\delta}{4} ,
  \end{equation*}
  which, after rearranging the terms, we see that it implies
  \begin{equation*}
    -\frac{\delta}{4} + \mathfrak{h} \left( -\frac{\delta}{4} \right) \geq -
    \frac{\delta}{2} + \mathfrak{h} \left( - \frac{\delta}{2} \right) + \delta .
  \end{equation*}
  Now, observe that if $w_1, w_2 \in [- \frac{ \delta }{ 2 }, \frac{
    \delta }{ 2 } ]$ are such that $w_2 + \mathfrak{h}(w_2) = w_1 + \mathfrak{h}(w_1) + n
  \delta$ for some $n \in \integers$, then $\mathcal{H}(w_1) =
  \mathcal{H}(w_2)$. As a result, we conclude that there exists an
  interval $I \in [- \frac{ \delta }{ 2 }, -\frac{ \delta }{ 4 } ]$
  such that the restriction $\mathcal{H}: I \rightarrow [- \frac{
    \delta }{ 2 }, \frac{ \delta }{ 2 } ]$ is continuous, one-to-one
  and onto. Hence the inverse mapping of this restriction is
  continuous and is a contraction and hence using the Banach contraction
  principle~\cite{pugh2002real}, there exists a fixed point of the
  original map $\mathcal{H}$ in $I$. Finally, note that as we sweep
  $z(0)$ through $(0, v(0)]$, $t_s^1$ varies continuously from
  $\infty$ to $0$. Thus, there exists a $z(0)$ such that $y_1 = t_s^1
  - q(t_s^1)$ is the fixed point in $I$. This proves Claim~(b).
\end{proof}

\red{
\begin{remark}
{\rm We use the assumption in~\eqref{eq:expansion-cond} in the proof of Theorem~\ref{PROBDEF1} to be able to apply the Banach contraction principle in establishing the existence of a suitable initial condition. We use the assumption $\nu \ge 2$ to ensure that the upper bound in~\eqref{gtsupperbound} is well defined. 
}
  \oprocend
\end{remark}
}

\begin{figure} 
    	\includegraphics[height=7.5cm,width=8.2cm]{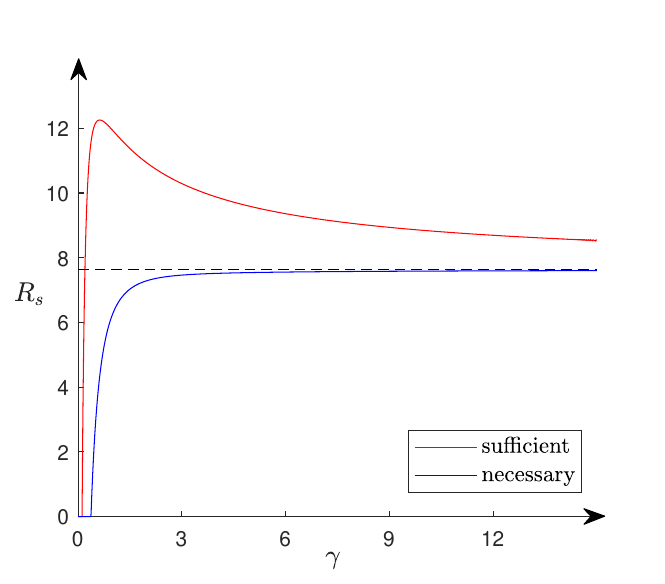}\caption{
    \red{Comparison between the sufficient and necessary conditions. $R_s$ is measured in $\mbox{bits}/\mbox{sec}$, and $\gamma$ is measure in $\mbox{sec}$.  Here, $A=1.3$, $\sigma=1$, $b=1.0001$, and $\rho_0=0.9$. The dashed line represents the asymptote $((A+\sigma)/\ln2)(1+A/\sigma)=7.6319$. 
    }}
    \label{fig:sn}
    \vspace*{-1ex}
\end{figure}

\red{
\begin{remark} {\rm Figure~\ref{fig:sn} illustrates the gap between
    the sufficient conditiont~\eqref{Sufi} and the supremum over $\sigma$ of the
    necessary condition~\eqref{necessT}. For
    small values of $\gamma$, both conditions reduce to $R_s>0$. As
    $\gamma$ grows to infinity, both conditions converge to the same
    asymptote with value
    $\frac{A+\sigma}{\ln2}(1+\frac{A}{\sigma})$.   While~\eqref{approxnec} reaches the asymptote  monotonically increasing for all $\rho_0$ values, the sufficient condition has an overshoot behavior for larger values of $\rho_0$ as depicted in Figure~\ref{fig:difrhosuff}. For intermediate values of $\gamma$, the gap can be explained
    noticing that  the exact value of the
    communication delay is unknown to the sensor and the
    controller, and hence there can be  a mismatch between the
    uncertainty sets at the controller and the sensor. In addition,  the    sensor and the controller lack a common reference frame for
    the quantization of the transmission time.
\oprocend}
\end{remark}

\begin{figure}
	\centering
\includegraphics[height=7.5cm,width=8.2cm]{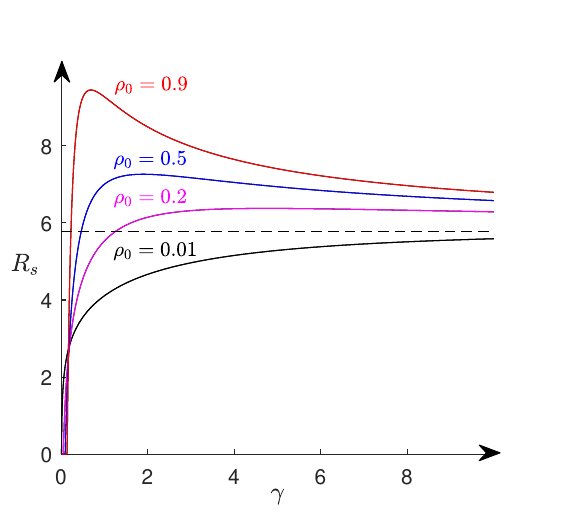}
     \caption{\red{Illustration of the sufficient transmission rate for asymptotic observability versus the upper bound of delay for different values of $\rho_0$. $R_s$ is measured in $\mbox{bits}/\mbox{sec}$, and $\gamma$ is measure in $\mbox{sec}$. Here, $A=1$, $\sigma=1$, and $b=1.0001$.  The dashed line represents the asymptote n$((A+\sigma)/\ln2)(1+A/\sigma)=5.7708$. 
     }}
     \label{fig:difrhosuff}
         \vspace*{-1ex}
 \end{figure}

\subsection{Simulation}
In this section, we illustrate an execution of our design for deriving the sufficient condition on the transmission rate. Using Theorem~\ref{thm:suf-cond-ET}, we choose the size of the packet
to be
\begin{align}\label{packetsimlatio1}
  g(t_s) = \max\left\{1,\lceil1+\log\frac{b\gamma
        (A+\sigma)}{\ln(1+\rho_0 e^{- (\sigma+A) \gamma})} \rceil\right\},
\end{align}
where the ceiling operator ensures that the packet size is an integer number (we take the maximum between this quantity and 1 to make sure to send at least one bit~\blue{of data payload} at each transmission). 

\begin{figure*}[htb]
  \centering
  \subfigure[]{\includegraphics[scale=0.67]{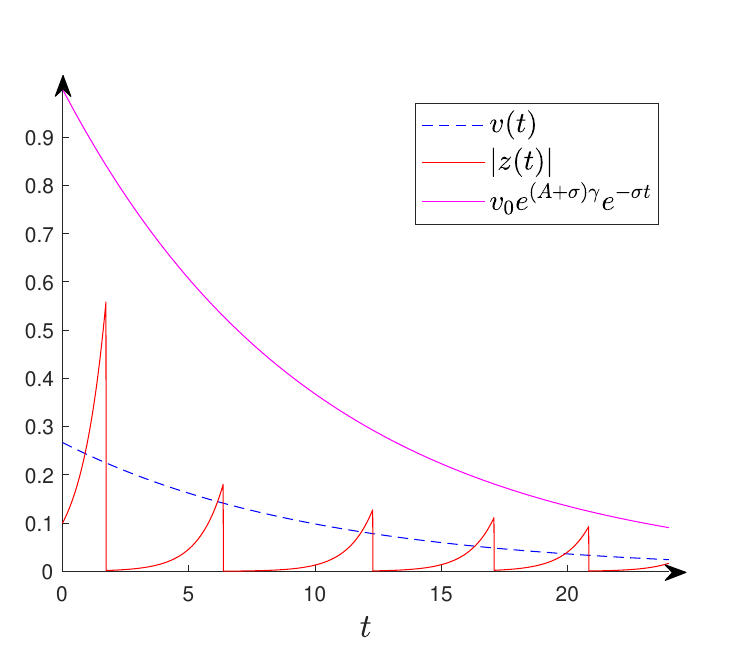}}
  \subfigure[]{\includegraphics[scale=0.6]{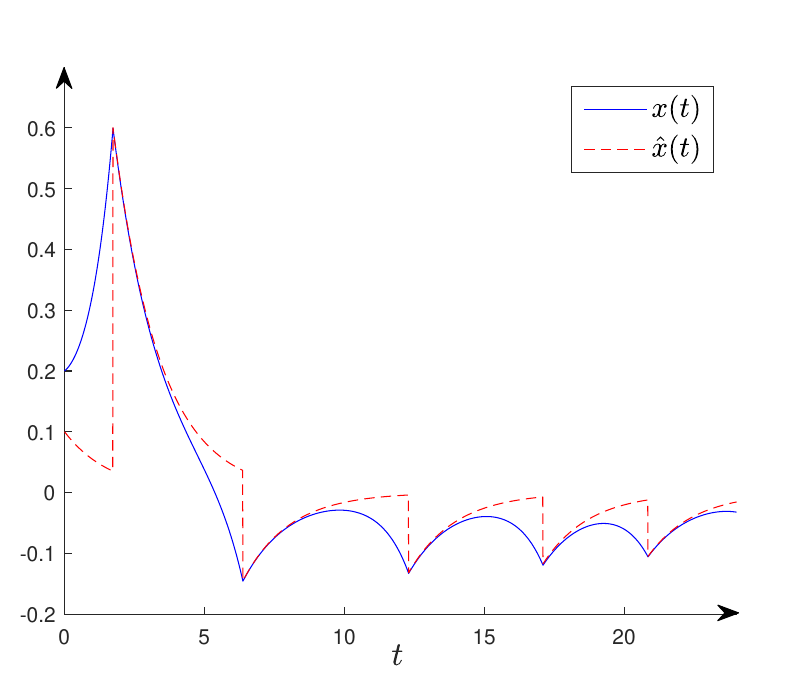}}
  \caption{\red{An example realization of our design. (a) shows the evolution of the absolute value of the state estimation error, the value of the event-triggering function, and the upper bound on the state estimation error. (b) shows the corresponding evolution of the state and state estimation. The continuous-time dynamics is discretized with step size $0.0002$.
Because of this, a triggering happens when $z(t)$ becomes larger than the triggering function and there is no packet in the communication channel. In fact, since the sampling time is small, a triggering happens when $z(t)$ becomes approximately equal $v(t)$.}}
  \label{fig:Actualc}
      \vspace*{-1ex}
\end{figure*}

We illustrate the execution of our design for the system
\begin{align*}
  \dot{x}(t)=x(t)+0.2u(t), \qquad u(t)=-8\hat{x}(t).
\end{align*}
The event-triggering function is $v(t)=0.2671 e^{-0.1t}$. The upper bound on the communication delay is  $\gamma=1.2$. The design parameter are $b=1.0001$, $\rho_0=0.1$, and the initial condition
 $x(0)=0.2$, and $\hat{x}(0)=0.1$.
Figure~\ref{fig:Actualc}(a) shows the evolution of the state estimation error. The triggering strategy ensures that
the state estimation error $z(t)$ converges exponentially to
zero and triggering occurs every time the state estimation error crosses
the triggering
function~$v(t)$. The overshoots observed in the
plot are due to the unknown delay in the communication channel. Clearly, $|z(t)|$ is upper bounded by $v_0 e^{(A+\sigma) \gamma} e^{-\sigma t}=e^{-0.1t}$. 
Figure~\ref{fig:Actualc}(b)  shows the corresponding evolution of $x(t)$ and $\hat{x}(t)$.
The values of $x(t)$ and $\hat{x}(t)$ become close to each other at the reception times because of the jump strategy, while the distance between $x(t)$ and $\hat{x}(t)$ grows during the inter-reception interval.

Finally, Figure~\ref{fig:ratesimulation} shows the information transmission rate of a simulation versus the delay upper bound $\gamma$ in the channel. The packet size is chosen according to~\eqref{packetsimlatio1}.  We calculate the information transmission rate by multiplying the packet size and the number of triggering events in the simulation time interval divided by its length. One can observe from the plot that, for small delay upper bound $\gamma$, the system is stabilized with an information transmission rate smaller than the data-rate theorem ($3.75$ bits$/$sec in this example). Instead, for larger $\gamma$, the transmission rate becomes greater than the threshold determined by the  data-rate theorem.

\begin{figure}
  \centering
  \includegraphics[scale=0.75]{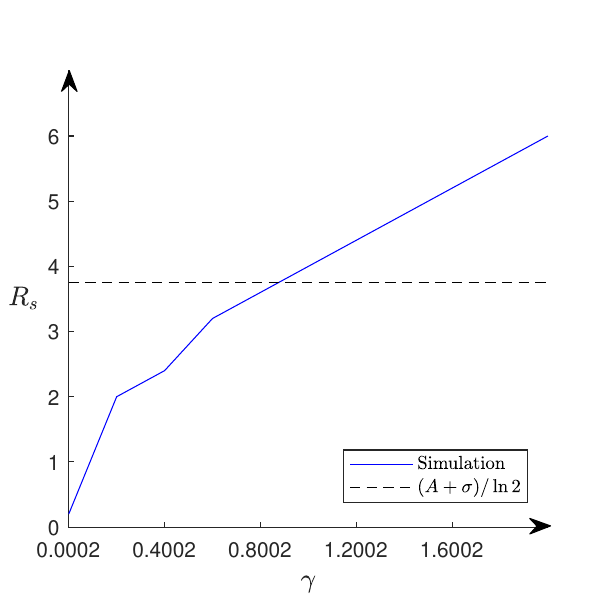}
  \caption{\red{Information transmission rate versus the upper bound of the delay in the communication channel.  $R_s$ is measured in $\mbox{bits}/\mbox{sec}$, and $\gamma$ is measured in $\mbox{sec}$. Here, $A=2.4$, $B=1$, $u(t)=-8\hat{x}(t)$, $\sigma=0.2$, $b=1.0001$, $\rho_0=0.1$, $v_0=0.0442$,
    $x(0)=0.201$, and $\hat{x}(0)=0.2$. The value of $\gamma$ ranges from $0.0005$ to $2.0005$, in steps of $0.2$.
   For each value of $\gamma$, we compute the transmission rate over an interval of $7$ seconds of simulation.}
    }\label{fig:ratesimulation}
    \vspace*{-1ex}
\end{figure}
}

\section{Extension to vector systems}\label{sec:HigherDimensions}
We generalize here our results to vector systems, building on the scalar case.
Consider the plant-sensor-channel-controller tuple in
Figure~\ref{fig:system}, and let the plant dynamics be described by a
continuous-time, linear time-invariant (LTI) system
\begin{align}
  \label{sysconHD}
  \dot{x}=Ax(t)+Bu(t),
\end{align}
where $x(t) \in \real^n$ and $u(t) \in \real^m$ for $t \in [0,\infty)$
are the plant state and the control input, respectively. Here, $A \in
M_{n,n}(\mathbb{R})$, $B \in M_{n,m}(\mathbb{R})$, and $\norm{x(0)}<L$, where
$L$ is known to both sensor and controller. We assume all the
eigenvalues of $A$ are real.  Without loss of generality, we also
assume that they are positive (since stable modes do not need any
actuation and we can disregard them).  In this setting, the intrinsic
entropy rate of the plant is
\begin{align}
  h_v=\frac{\Tr(A)}{\ln 2} = \frac{\sum_{i=1}^n \lambda_i}{\ln 2}.
  \label{eq:inhHD}
\end{align}
Hence, to guarantee stability it is necessary for the controller to
have access to state information at a rate
\begin{align*}
  R_c \ge h_v.
\end{align*}
Using the Jordan block decomposition~\cite{Prasolov}, we can write the
matrix $A \in M_{n,n}(\mathbb{R})$ as $\Phi \Psi \Phi^{-1}$, where
$\Phi$ is a real-valued invertible matrix and
$\Psi=\text{diag}[J_1,\ldots, J_q]$, where each $J_j$ is a Jordan
block corresponding to the real-valued eigenvalue $\lambda_j$ of
$A$. We let $p_j$ indicate the order of each Jordan block. 
For simplicity of exposition, we assume from
here on that $A$ is equal to its Jordan block decomposition, that is,
$A=\text{diag}[J_1,\ldots, J_q]$.

In the following, we deal with each state coordinate separately. This
corresponds to treating the $n$-dimensional system as $n$ scalar,
coupled systems. When a triggering occurs for one of the coordinates,
the controller should be aware of which coordinate the received packet
corresponds to. Accordingly, we assume there are $n$ parallel
finite-rate digital communication channels between each coordinate of
the system and the controller, each subject to unknown, bounded delay.

We use the same notation of Section~\ref{sec:setup}, but add 
subindex $i$ and superindex $j$ to specify the $i^{\text{th}}$
coordinate of the $j^{\text{th}}$ Jordan block. So, for instance,
$\{ t_{s,i}^{k,j} \}_{k \in \integerspos}$,
$\{ t_{c,i}^{k ,j}\}_{k \in \integerspos}$, $g(t_{s,i}^{k,j})$ denote
the sequences of transmission times, reception times, and number of
bits that the sensor transmits at each triggering time. Similarly, the
$k^{\text{th}}$ \emph{communication delay} $ \Delta_{k,i}^j$ and
$k^{\text{th}}$ \emph{triggering interval} ${\Delta'}_{k,i}^{j}$ can
be specified for each coordinate. The communication delays for all
coordinates are uniformly upper-bounded by $\gamma$, a non-negative
real number known to both the sensor and the controller. 
The transmission rate for each coordinate is then
\begin{align*}
  R_{s,i} ^j= \limsup_{N_i^j \rightarrow \infty}
  \frac{\sum_{k=1}^{N_i^j} g(t_{s,i}^{k,j})}{\sum_{k=1}^{N_i^j}
    \Delta'^j_{k,i}}.
\end{align*}
Assuming $n$ parallel communication channels between the plant and the
controller, each devoted to a coordinate separately, we have
\begin{align*}
  R_s=\sum_{j=1}^{q}\sum_{i=1}^{p_j}R_{s,i}^j.
\end{align*}
Using the same notation of Section~\ref{sec:setup}, when
referring to a generic triggering or reception time, we omit the
superscript~$k$. 

The controller maintains an estimate $\hat{x}$ of the state, which evolves
according to
\begin{align}\label{sysestHD}
  \dot{\hat{x}}(t)=A\hat{x}(t)+Bu(t),
\end{align}
during the inter-reception times. 
The \emph{state estimation error} is   $z(t)=x(t)-\hat{x}(t)$,
which initially is set to $z(0)=x(0)-x_0$.
For the $i^{\text{th}}$ coordinate of the $j^{\text{th}}$ Jordan block, we
consider an event-triggering function as in~\eqref{eq:etf} with
different initial values $v^j_0$ for each coordinate, namely
\begin{align}
  \label{eq:etfHD}
  v_i^j(t)=v_{0,i}^j e^{-\sigma t}.
\end{align}
For each coordinate, we employ the triggering rule~\eqref{eq:ets} and
the jump strategy~\eqref{eq:jumpst}.
When a triggering occurs for the $i^{\text{th}}$ coordinate of the
$j^{\text{th}}$ Jordan block, we assume that the sensor sends a packet large enough
to ensure
\begin{align}
  |z_i^j(t_{c,i}^{j+})| \le\rho_0  e^{-\sigma \gamma} v(t_{s,i}^{j}).
  \label{eq:jump-upp-HD}
\end{align}
When referring to a generic Jordan block, 
we omit the superscript and subscript~$j$.

Although each Jordan block is effectively independent of each other,
the vector case is not an immediate extension of the scalar
one. Specifically, from~\eqref{sysconHD} and~\eqref{sysestHD}, we have
that
\begin{align}\label{eq:errjordan}
  \dot{z}_1(t)&=\lambda z_1(t)+z_2(t)\\\nonumber & \; \vdots \\
  \nonumber \dot{z}_{p-1}(t)&=\lambda z_{p-1}(t)+z_p(t)\\\nonumber
  \dot{z}_{p}(t)&=\lambda z_{p}(t),
\end{align}
where $p$ denotes the order of the Jordan block. This shows that the
evolution of the coordinates is coupled and hence, even assuming
parallel communication channels, care must be taken in generalizing
the results for the scalar case.

Our first result generalizes Theorem~\ref{thm:necc-access-rate} on the
necessary condition for the information access rate.

\begin{theorem}\label{thm:necc-access-rate-HD}
  Consider the plant-sensor-channel-controller model described in
  Section~\ref{sec:setup}, with plant dynamics~\eqref{sysconHD},
  and state estimation error $z (t)$. Let $\sigma \in \real$ be
  positive, then the following necessary conditions hold:
  \begin{enumerate}
  \item If the state estimation error satisfies
    \begin{align*}
      \norm{z(t)} \le \norm{z(0)} ~e^{-\sigma t},
    \end{align*}
    then
    \begin{align*}
      b_c(t) \ge t~\frac{\Tr(A)+n\sigma}{\ln 2}+n~ \log
      \frac{L}{\norm{z(0)}}.
    \end{align*}

  \item If the system in~\eqref{sysconHD} is stabilizable and
    \begin{align*}
      \norm{x(t)} \le \norm{x(0)} ~e^{-\sigma t},
    \end{align*}
    then
    \begin{align*}
      b_c(t) \ge t ~\frac{\Tr(A)+n\sigma}{\ln 2} .
    \end{align*}
  \end{enumerate}
  In both cases, the information access rate is $R_c \ge \frac{\Tr(A)+n\sigma}{\ln 2}$.
\end{theorem}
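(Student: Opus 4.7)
The plan is to mirror the proof of Theorem~\ref{thm:necc-access-rate}, with two adjustments appropriate to the vector setting: the linear flow $x(0) \mapsto e^{At} x(0)$ has Jacobian determinant $|\det e^{At}| = e^{\Tr(A) t}$, and a ball of radius $r$ in $\real^n$ has Lebesgue measure proportional to $r^n$. These two facts respectively promote $A$ to $\Tr(A)$ and $\sigma$ to $n\sigma$ in the bounds.

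For part~(i), I would write $x(t) = e^{At} x(0) + \alpha(t)$, where $\alpha(t) = e^{At} \int_0^t e^{-A \tau} B u(\tau) d\tau$, and define the uncertainty set $\Gamma_t = \{ e^{At} x(0) + \alpha(t) : x(0) \in \mathcal{B}(L) \}$. Since $\Gamma_t$ is the affine image of $\mathcal{B}(L)$ under an invertible linear map followed by a translation, $m(\Gamma_t) = e^{\Tr(A) t} m(\mathcal{B}(L))$. Because $\norm{z(t)} \le \norm{z(0)} e^{-\sigma t}$, the controller's residual uncertainty about $x(t)$ fits inside a ball of radius $\epsilon(t) = \norm{z(0)} e^{-\sigma t}$. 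Covering $\Gamma_t$ by such balls requires at least $m(\Gamma_t)/m(\mathcal{B}(\epsilon(t))) = e^{\Tr(A) t} (L/\norm{z(0)})^n e^{n \sigma t}$ of them, and taking the base-$2$ logarithm yields $b_c(t) \ge t (\Tr(A) + n \sigma)/\ln 2 + n \log(L/\norm{z(0)})$.

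For part~(ii), fix a control trajectory $\{u(\tau)\}_{\tau \in [0,t]}$ and consider $\Pi_u = \{ x(0) \in \mathcal{B}(L) : \norm{x(t)} \le \norm{x(0)} e^{-\sigma t} \}$. The sets $\Pi_u$ are translates of one another as $u$ varies, by linearity of $x(t)$ in $u$. Using $\norm{x(0)} \le L$ gives $\Pi_u \subseteq \{ x(0) : \norm{e^{At} x(0) + \alpha(t)} \le L e^{-\sigma t} \}$, which is the preimage under $e^{At}$ of a ball of radius $L e^{-\sigma t}$ centered at $-\alpha(t)$. By the change-of-variables formula, $m(\Pi_u) \le m(\mathcal{B}(L)) \, e^{-(n\sigma + \Tr(A)) t}$. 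Since each distinguishable bit sequence received by the controller corresponds to one $\Pi_u$, and the union of these sets must cover $\mathcal{B}(L)$, at least $e^{(\Tr(A) + n \sigma) t}$ such sets are required, giving $b_c(t) \ge t (\Tr(A) + n \sigma)/\ln 2$. Dividing either bound by $t$ and letting $t \to \infty$ produces the announced information access rate.

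I do not expect any serious obstacle here: the skeleton of the argument is precisely that of Theorem~\ref{thm:necc-access-rate}. The only care needed is in the bookkeeping for affine offsets (the $\alpha(t)$ term merely translates sets, preserving Lebesgue measure) and in tracking the $n$-th power of radii when converting norm inequalities into volume estimates. The identity $\det(e^{At}) = e^{\Tr(A) t}$ is the clean mechanism by which all unstable modes contribute additively to the intrinsic entropy rate.
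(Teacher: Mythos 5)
Your proposal is correct and follows essentially the same route as the paper, which omits the detailed proof but states that it is the scalar argument of Theorem~\ref{thm:necc-access-rate} combined with exactly the facts you use: $m(AX)=|\det(A)|\,m(X)$, $\det(e^{At})=e^{\Tr(A)t}$, and the $r^n$ scaling of the volume of a ball in $\real^n$, which is how $A$ becomes $\Tr(A)$ and $\sigma$ becomes $n\sigma$. Your aside that the sets $\Pi_u$ are exact translates of one another is not quite accurate (the right-hand side $\norm{x(0)}e^{-\sigma t}$ and the intersection with $\mathcal{B}(L)$ do not translate), but it is never used, since your uniform bound $m(\Pi_u)\le m(\mathcal{B}(L))e^{-(\Tr(A)+n\sigma)t}$ via $\norm{x(0)}\le L$ and the change of variables is what carries the covering argument.
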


The proof of this result, omitted for space reasons, is analogous to
that of Theorem~\ref{thm:necc-access-rate}, noting that for $A \in
M_{n,n}(\real)$ and $X \in \real^{n}$, $m(AX)=|\det(A)|m(X)$,
$\det(e^A)=e^{\Tr(A)}$, and that the Lebesgue measure of a sphere of
radius $\epsilon$ in $\mathbb{R}^n$ is $k_n \epsilon ^n$, where $k_n$
is a constant that changes with dimension.

We next generalize the necessary condition on the information
transmission rate. If $A$ is diagonalizable, then the necessary and
sufficient bit rate for the vector system is equal to the sum of the
necessary and sufficient bit rates that we provide in
Section~\ref{sec:estim-err} for each coordinate of the system. We now
generalize this idea to any matrix with real eigenvalues.

\begin{theorem}\label{thm:Necvector}
  Consider the plant-sensor-channel-controller model with plant
  dynamics~\eqref{sysconHD}, where all eigenvalues of $A$ are real,
  estimator dynamics~\eqref{sysestHD}, event-triggering
  strategy~\eqref{eq:ets}, event-triggering function~\eqref{eq:etfHD},
  and packet sizes such that $z_i^j(t_{c,i}^{k,j})$ is determined at
  the controller within a ball of radius $\rho(t_{s,i}^{k,j})=\rho_0
  e^{-\sigma \gamma} v(t_{s,i}^{k,j})$ with $\nu$-precision,
  ensuring~\eqref{eq:jump-upp-HD} via the jump
  strategy~\eqref{eq:jumpst} for all $ k \in \mathbb{N}$,
  $i=1,\ldots,p_j$, and $j=1,\ldots,q$.
  Then, there exists a delay realization and initial condition, such that
  \begin{align*}
    R_s \ge
    \sum_{j=1}^{q}\frac{p_j(\lambda_j+\sigma)}{\ln\nu+\ln(2+\frac{e^{\sigma
          \gamma}}{\rho_0})}\max \left\{0,\log \frac{(e^{\lambda_j
          \gamma}-1)}{ \rho_0 e^{-\sigma \gamma} }\right\}.
\end{align*}
\end{theorem}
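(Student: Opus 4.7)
The plan is to reduce the vector-system analysis to a per-coordinate version of the scalar Theorem~\ref{THM:NECESAAPP}. Because $A$ is in Jordan form, distinct blocks evolve independently, and by the definition of the transmission rate we have $R_s = \sum_{j=1}^q \sum_{i=1}^{p_j} R_{s,i}^j$. It therefore suffices to prove that each coordinate $(i,j)$ contributes at least $\frac{\lambda_j+\sigma}{\ln\nu+\ln(2+e^{\sigma\gamma}/\rho_0)} \max\bigl\{0, \log \frac{e^{\lambda_j \gamma}-1}{\rho_0 e^{-\sigma\gamma}}\bigr\}$ and then sum; the worst-case initial condition and delay realization required by the theorem statement are obtained by concatenating the per-coordinate worst cases.

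For each coordinate, I would retrace the three-step argument of the scalar case: establish the inclusion of uncertainty sets as in Lemma~\ref{inclusionsets}, lower-bound the per-event packet size as in Lemma~\ref{thm:necc-cond-ET-g}, and upper-bound the inter-triggering interval under Assumption~\ref{Defnition11} as in Lemma~\ref{thm:necc-cond-ET}. The bottom coordinate $z_{p_j}^j$ of each block is handled immediately: from~\eqref{eq:errjordan} its dynamics $\dot{z}_{p_j}^j = \lambda_j z_{p_j}^j$ are scalar with rate $\lambda_j$, so the scalar lemmas apply verbatim with $A$ replaced by $\lambda_j$ and yield the claimed per-coordinate bound.

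For higher coordinates $i<p_j$, the explicit Jordan-block solution gives $z_i^j(t+\Delta) = e^{\lambda_j \Delta} \sum_{k=0}^{p_j-i} \frac{\Delta^k}{k!} z_{i+k}^j(t)$, so the sensor's uncertainty about $z_i^j(t_{c,i}^j)$ given $t_{s,i}^j$ is a curve parametrized by $\Delta\in[0,\gamma]$, and likewise for the controller's uncertainty given $t_{c,i}^j$. I would parametrize these sets in analogy with the scalar case, use the monotonicity of the event-triggering function in~\eqref{eq:etfHD} to verify the inclusion, and then count the balls of radius $\rho(t_{s,i}^{k,j})$ needed to cover the sensor set, obtaining the packet-size bound. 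The triggering-rate bound follows analogously by invoking Assumption~\ref{Defnition11} on each coordinate-channel pair and bounding $\Delta_{k,i}^{\prime j}$ from the triggering condition, yielding at least $(\lambda_j+\sigma)/(\ln\nu+\ln(2+e^{\sigma\gamma}/\rho_0))$.

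The hard part will be controlling the shape of the sensor's uncertainty curve despite the polynomial-in-$\Delta$ coupling terms $\tfrac{\Delta^k}{k!} z_{i+k}^j$, which can in principle make the curve non-monotonic in $\Delta$ and complicate the covering count. I intend to handle this by selecting, as part of the worst-case initial condition, signs and magnitudes of $z_{i+1}^j(t_{s,i}^{k,j}),\ldots,z_{p_j}^j(t_{s,i}^{k,j})$ that keep the curve monotone and aligned with the dominant $e^{\lambda_j \Delta}$ term, so that it sweeps a one-dimensional interval of length at least $v_i^j(t_{s,i}^{k,j})(e^{\lambda_j \gamma}-1)$; the scalar covering bound then transfers unchanged. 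Multiplying the per-coordinate packet-size and triggering-rate bounds and summing over all $(i,j)$ produces the stated expression, with the factor $p_j$ arising from the $p_j$ coordinates in each block $j$.
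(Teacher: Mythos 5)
Your overall skeleton matches the paper's: reduce to one Jordan block, treat the bottom coordinate with the scalar Lemmas~\ref{thm:necc-cond-ET-g} and~\ref{thm:necc-cond-ET} applied verbatim with $A$ replaced by $\lambda_j$, get a per-coordinate rate bound, and sum over the $p_j$ coordinates of each block using the parallel-channel decomposition $R_s=\sum_{j,i}R_{s,i}^j$. The place where you diverge is exactly the step you flag as hard, and your fix for it has a genuine gap. You propose to choose, ``as part of the worst-case initial condition,'' the signs and magnitudes of $z_{i+1}^j(t_{s,i}^{k,j}),\dots,z_{p_j}^j(t_{s,i}^{k,j})$ at \emph{every} triggering time of coordinate $i$ so that the coupling term reinforces the sweep. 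But those values are not free parameters: the only freedom is $z(0)$, and the values of the higher coordinates at the (infinitely many) triggering times of coordinate $i$ are then determined by the closed-loop evolution \eqref{eq:errjordan}, by the jump residuals left by the quantization policy at each of their own reception times, and by the delay realization. Since the necessity claim must hold for \emph{every} admissible quantization policy satisfying \eqref{eq:jump-upp-HD} with $\nu$-precision, you cannot rely on the policy cooperating to preserve sign alignment after each jump; and because the rate bound needs the per-event packet-size bound at essentially all events (not just some), establishing alignment at a few events does not suffice. As stated, the claim that a single choice of $z(0)$ keeps the uncertainty curve monotone and of length at least $v_i^j(t_{s,i}^{k,j})(e^{\lambda_j\gamma}-1)$ for all $k$ is unsubstantiated.

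The paper avoids this issue by a different (and policy-independent) argument for the coupled coordinates: it writes the sensor's uncertainty set for $z_1(t_{c,1})$ given $t_{s,1}$ as the set of sums $y_1+y_2$, where $y_1=\pm v_1(t_{s,1})e^{\lambda(t_{c,1}-t_{s,1})}$ is the decoupled contribution and $y_2$ is the integral coupling term ranging over the sensor's uncertainty $\zeta_\tau^{s,2}$ about the future evolution of $z_2$ (which is genuinely uncertain to the sensor because of the unknown reception times on the other coordinate's channel). It then argues that the measure of this set is at least that of the decoupled set $Y_1$, so the covering count of Lemma~\ref{thm:necc-cond-ET-g} — and hence the per-coordinate packet-size bound — transfers unchanged, after which Lemma~\ref{thm:necc-cond-ET} is invoked per coordinate exactly as you do. If you want to repair your proof along your own lines, you would need either such a measure-monotonicity argument (the coupling enters as an additional uncertain summand and cannot shrink the set the sensor must cover), or a demonstration that the required sign pattern can actually be sustained under an arbitrary admissible policy; the initial condition alone does not give you that.
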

\begin{proof}
  Since there is no coupling across
  different Jordan blocks in~\eqref{sysconHD}, the inherent entropy rate~\eqref{eq:inhHD}
  is
  \begin{align*}
    h_v(A)=h_v(J_1)+\cdots+h_v(J_q) .
  \end{align*}
  Therefore, it is enough to prove the result for one of the Jordan
  blocks. Let $J$ be a Jordan block of order $p$ with  associated
  eigenvalue $\lambda$.  Note that the part of the vector $z(t)$
  which corresponds to $J$ is governed by~\eqref{eq:errjordan}.
  The solution of the first differential equation
  in~\eqref{eq:errjordan} is
  \begin{align*}
    z_1(t)=e^{\lambda t}z_1(0)+e^{\lambda t} \int_{0}^{t} e^{-\lambda
      \tau} z_2(\tau)d\tau.
  \end{align*}
  If for the first coordinate a triggering event occurs at time
  $t_{s,1}$, then $z_1(t_{c,1})$ belongs to the set
  \begin{align*}
    & \Omega(z(t_{c,1}) | t_{s,1}) =\{y=y_1+y_2: y_1=\pm
    v_1(t_{s,1})e^{\lambda(t_{c,1}-t_{s,1})}, 
    \\
    & \hspace*{7ex} y_2=\int_{t_{s,1}}^{t_{c,1}}
    e^{\lambda(t_{c,1}-\tau)}z_2(\tau)d\tau;   \; t_{c,1} \in
    [t_{s,1},t_{s,1}+\gamma],
    \\
    & \hspace*{15ex} z_2(\tau)\in \zeta_\tau^{s,2} ~~ \text{for} ~\tau \in [t_{s,1},t_{c,1}]\},
  \end{align*}
  where $\zeta_\tau^{s,2}$ is the uncertainty set for $z_2(\tau)$ at the sensor.
  We define
  \begin{align*}
    Y_1&=\{y_1:y_1=\pm v(t_{s,1})e^{\lambda(t_{c,1}-t_{s,1})},t_{c,1} \in [t_{s,1},t_{s,1}+\gamma]\},
  \end{align*}
  which is the uncertainty set of $z_1(t_{c,1})$ given $t_{s,1}$ for the differential equation $\dot z_1 = \lambda z_1$.
  By comparing the definitions of the sets  $\Omega(z(t_{c,1}) | t_{s,1})$ and $Y_1$, we have
  \begin{align*}
    m(\Omega(z(t_{c,1}) | t_{s,1}))\ge m(Y_1).
  \end{align*}
  Finally, we apply Lemmas~\ref{thm:necc-cond-ET-g}
  and~\ref{thm:necc-cond-ET} for each coordinate separately, so that
  the necessary bit rate for each must satisfy
  \begin{align*}
    R_{s,i} \ge \frac{\lambda+\sigma}{\ln\nu+\ln(2+\frac{e^{\sigma
          \gamma}}{\rho_0})}\max \left\{0,\log \frac{(e^{\lambda
          \gamma}-1)}{ \rho_0 e^{-\sigma \gamma} }\right\}
  \end{align*}
  for $i=1,\ldots,p$. The result now follows. 
\end{proof}

Note that, when $\rho_0 \ll e^{\sigma \gamma}/\max\{2,\nu\}$, the
result in Theorem~\ref{thm:Necvector} can be simplified to
\begin{align*}
  R_s \ge \sum_{j=1}^{q}\frac{p_j (\lambda_j+\sigma)}{\ln 2} \max
  \left\{0 ,1+\frac{\log (e^{\lambda_j \gamma}-1)}{-\log (\rho_0
      e^{-\sigma \gamma})}\right\}.
\end{align*}
Our next result generalizes the sufficient condition of
Theorem~\ref{thm:suf-cond-ET} to vector systems.

\begin{theorem}\label{thm:Sufvector}
  Consider the plant-sensor-channel-controller model with plant
  dynamics~\eqref{sysconHD}, where all eigenvalues of $A$ are real,
  estimator dynamics~\eqref{sysestHD}, event-triggering
  strategy~\eqref{eq:ets}, and event-triggering
  function~\eqref{eq:etfHD}.  For the $j^{\text{th}}$ Jordan block
  choose the following sequence of design parameters
  \begin{align*}
    0<\rho_{1}^j<\ldots<\rho_{p_j-1}^j<\rho_{p_j}^j=\rho_0<1.
  \end{align*}
  If the state estimation error satisfies $|z_i^j(0)|\le v_{0,i}^j $,
  then we can achieve~\eqref{eq:jump-upp-HD} and
  \begin{align*}
    |z_{i}^j(t)|\le v_{0,i}^j
    ((\rho_0-\rho_{i}^j)+e^{(\lambda_j+\sigma)\gamma}) e^{-\sigma t}
  \end{align*}
  for $i=1,\ldots, p_j$ and $j=1,\ldots,q$, with an information
  transmission rate, $R_s$, at least equal to
  \begin{small}
    \begin{align*}
      \sum_{j=1}^{j=q}\sum_{i=1}^{i=p_j}\frac{
        (\lambda_j+\sigma)}{-\ln(\rho_0 e^{-\sigma \gamma})}
      \max\left(0,1+\log\frac{b\gamma
          (\lambda_j+\sigma)}{\ln(1+\rho_i^j e^{- (\sigma+\lambda_j)
            \gamma})}\right),
    \end{align*}
  \end{small}
  where
  \begin{align}\label{GRRMARTIN}
    0<v_{0,i}^j \le \frac{v_{0,i-1}^j
      (\lambda_j+\sigma)(\rho_0-\rho_{i}^j)}{((\rho_0-\rho_{i}^j)+e^{(\lambda_j+\sigma)\gamma})(e^{(\lambda_j+\sigma)\gamma}-1)}, 
  \end{align}
  for $i=2,\ldots,p_j$ and $j=1,\ldots,q$, and $b>1$.
\end{theorem}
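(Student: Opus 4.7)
My plan is to exploit the block-diagonal structure of $A$: since there is no coupling across distinct Jordan blocks in~\eqref{sysconHD}, it suffices to prove the envelope and rate bound for a single Jordan block $J_j$ of order $p_j$ with eigenvalue $\lambda_j$, and then sum over $j$. Within a fixed block I will proceed by backward induction on the coordinate index, starting from $i=p_j$ (which is scalar-like, with no coupling) and descending to $i=1$. For every coordinate I reuse the scheme of Theorem~\ref{thm:suf-cond-ET}: on the parallel channel dedicated to coordinate $(i,j)$ the sensor transmits the sign of $z_i^j(t_{s,i}^j)$ together with a quantized version of the triggering time.

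The base case $i=p_j$ satisfies $\dot z_{p_j}^j=\lambda_j z_{p_j}^j$, which is exactly the scalar dynamics analyzed in Section~\ref{sec:estim-err} with growth rate $\lambda_j$. Since $|z_{p_j}^j(0)|\le v_{0,p_j}^j$ and $\rho_{p_j}^j=\rho_0$, Theorem~\ref{thm:suf-cond-ET} and Lemma~\ref{lemmasu} apply verbatim, giving $|z_{p_j}^j(t_c^+)|\le \rho_0 e^{-\sigma\gamma} v_{p_j}^j(t_s)$ and, via~\eqref{expodue}, the envelope $|z_{p_j}^j(t)|\le v_{0,p_j}^j e^{(\lambda_j+\sigma)\gamma} e^{-\sigma t}$, which matches the claim since $\rho_0-\rho_{p_j}^j=0$. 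The contribution to $R_s$ is the scalar sufficient rate~\eqref{Sufi} with $A$ and $\rho_0$ replaced by $\lambda_j$ and $\rho_{p_j}^j$.

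For the inductive step $i<p_j$, variation of constants applied to $\dot z_i^j=\lambda_j z_i^j+z_{i+1}^j$ gives
\begin{align*}
z_i^j(t_c) = e^{\lambda_j(t_c-t_s)} z_i^j(t_s) + \int_{t_s}^{t_c} e^{\lambda_j(t_c-\tau)} z_{i+1}^j(\tau)\, d\tau ,
\end{align*}
so the controller's reconstruction error splits into (a) the quantization/timing error of the homogeneous part and (b) an unmodeled forcing fed by $z_{i+1}^j$. I will have the controller reconstruct only the homogeneous part, applying Lemma~\ref{lemmasu} with the smaller design parameter $\rho_i^j$ in place of $\rho_0$ and $\lambda_j$ in place of $A$; this bounds the homogeneous contribution by $\rho_i^j e^{-\sigma\gamma} v_i^j(t_s)$. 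Plugging the inductive envelope for $z_{i+1}^j$ into the integral and estimating as in the derivation of~\eqref{expodue}, the forcing contribution is controlled by $v_{0,i+1}^j\bigl((\rho_0-\rho_{i+1}^j)+e^{(\lambda_j+\sigma)\gamma}\bigr)(e^{(\lambda_j+\sigma)\gamma}-1)/(\lambda_j+\sigma)$ times $e^{-\sigma t_c}$. The scaling~\eqref{GRRMARTIN} between successive $v_{0,\cdot}^j$ is precisely what keeps this forcing contribution at most $(\rho_0-\rho_i^j)e^{-\sigma\gamma}v_i^j(t_s)$, so the total jump obeys~\eqref{eq:jump-upp-HD}. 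Propagating $z_i^j$ through the worst-case delay then yields the claimed envelope with factor $(\rho_0-\rho_i^j)+e^{(\lambda_j+\sigma)\gamma}$, closing the induction.

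Once~\eqref{eq:jump-upp-HD} is secured at every $(i,j)$, the per-coordinate packet-size lower bound~\eqref{lowerbndgsuf} (with $\rho_i^j,\lambda_j$ replacing $\rho_0,A$) together with the triggering-rate bound of Lemma~\ref{Lemma1} yields each summand of the claimed transmission rate, and summing over $i$ and $j$ produces $R_s$. I expect the main obstacle to be the inductive bookkeeping of the forcing term: verifying that the slack $(\rho_0-\rho_i^j)$ carved out of the jump budget at coordinate $i$, together with the geometric shrinkage of $v_{0,i+1}^j$ relative to $v_{0,i}^j$ imposed by~\eqref{GRRMARTIN}, exactly absorbs the accumulated forcing from the downstream coordinate without enlarging the claimed envelope or the claimed rate.
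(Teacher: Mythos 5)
Your proposal follows essentially the same route as the paper's proof: reduce to a single Jordan block, handle the last coordinate with the scalar construction of Theorem~\ref{thm:suf-cond-ET}, then work backwards through the block, splitting each coordinate's reconstruction error at reception into a homogeneous part (handled by Lemma~\ref{lemmasu} with the smaller parameter $\rho_i^j$) and a coupling term driven by the downstream coordinate, which is absorbed through the scaling~\eqref{GRRMARTIN}; the per-coordinate rates then come from the packet-size bound~\eqref{lowerbndgsuf} and the triggering-rate bound of Lemma~\ref{Lemma1}, summed over coordinates and blocks. The only real difference is presentational: where you split by the triangle inequality after variation of constants, the paper expresses the controller's uncertainty about $z_{p-1}(t_{c,p-1})$ as a Brunn--Minkowski sum of two intervals $W_{p-1}^{(1)}$ and $W_{p-1}^{(2)}$ and invokes the equality case of the Brunn--Minkowski inequality for intervals; in one dimension the two arguments amount to the same estimate.

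The one place to be careful is exactly the bookkeeping you flag. First, the absorption you claim, namely coupling term $\le(\rho_0-\rho_i^j)e^{-\sigma\gamma}v_i^j(t_s)$, is stronger than what is available: plugging the downstream envelope into the integral and using~\eqref{GRRMARTIN} yields a bound proportional to $v_i^j(t_c)$ with no spare factor $e^{-\sigma\gamma}$ (the delay may be zero), so what one actually obtains, and what the paper itself proves, is the post-jump bound $|z_i^j(t_{c}^{+})|\le\rho_i^j v_i^j(t_c)+(\rho_0-\rho_i^j)v_i^j(t_c)=\rho_0 v_i^j(t_c)$, which still places the error below the triggering function and yields the claimed envelope $v_{0,i}^j\bigl((\rho_0-\rho_i^j)+e^{(\lambda_j+\sigma)\gamma}\bigr)e^{-\sigma t}$. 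Second, at the very first inductive step (protecting coordinate $p_j-1$), the constraint~\eqref{GRRMARTIN} applied literally at index $i=p_j$ degenerates, since $\rho_0-\rho_{p_j}^j=0$ would force $v_{0,p_j}^j\le 0$; the paper's proof instead uses the upstream slack, i.e., $v_{0,p}\le v_{0,p-1}(\lambda+\sigma)(\rho_0-\rho_{p-1})/\bigl(e^{(\lambda+\sigma)\gamma}(e^{(\lambda+\sigma)\gamma}-1)\bigr)$, pairing the slack $(\rho_0-\rho_{i-1}^j)$ of the coordinate being protected with the envelope factor of the coordinate doing the forcing. With that convention (which for $i<p_j$ is implied by the literal~\eqref{GRRMARTIN}, since $\rho_0-\rho_{i}^j\le\rho_0-\rho_{i-1}^j$), your backward induction closes exactly as the paper's does.
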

\begin{proof}
  It is enough to prove the result for one Jordan block.  The solution
  of the last two equations in~\eqref{eq:errjordan} is
  \begin{align}\label{Solsuf}
    z_{p-1}(t)&=e^{\lambda t}z_{p-1}(0)+e^{\lambda t} \int_{0}^{t}
    e^{-\lambda \tau} z_p(\tau)d\tau,
    \\
    \nonumber {z}_{p}(t)&=e^{\lambda t}z_{p}(0).
  \end{align}
  The differential equation that governs $z_p(t)$ is similar to what
  we considered in Theorem~\ref{thm:suf-cond-ET}. It follows that if
  the transmission rate for coordinate $p$ is lower bounded
  as~\eqref{Sufi} and $|z_p(0)| \le v_{0,p}$, then we can ensure
  $|z_p(t)| \le v_{0,p} e^{(\sigma+\lambda)\gamma} e^{-\lambda t}$.

  Assume now that a triggering happens for coordinate $p-1$ at time $t_{s,p-1}$, namely $|z_{p-1}(t_{s,p-1})|=v(t_{s,p-1})$, and the controller receives the packet related to coordinate $p-1$
  at time $t_{c,p-1}$. 
  Then the uncertainty set  for $z_{p-1}(t_{c,p-1})$ at the
  controller is
  \begin{multline}\label{uncHDsuf}
    \Omega(z(t_{c,p-1})|
    t_{c,p-1})=\{w_{p-1}=w_{p-1}^{(1)}+w_{p-1}^{(2)}:
    \\
    w_{p-1}^{(1)}=\pm
    v_{p-1}(\bar{t}_{r,p-1})e^{\lambda(t_{c,p-1}-\bar{t}_{r,p-1})},
    \\
    w_{p-1}^{(2)}=\int_{\bar{t}_{r,p-1}}^{t_{c,p-1}}
    e^{\lambda(t_{c,p-1}-\tau)}z_p(\tau)d\tau;
    \\
    \bar{t}_{r,p-1} \in [t_{c,p}-\gamma,t_{c,p-1}],
    \\
    z_{p}(\tau) \in \zeta_\tau^{c,p} ~ \text{for} ~\tau \in
    [\bar{t}_{r,p-1},t_{c,p-1}]\},
  \end{multline}
  where $\zeta_\tau^{c,p}$ is the uncertainty set for $z_p(\tau)$ at
  the controller. Clearly, the measure of $\Omega(z(t_{c,p-1})|
  t_{c,p-1})$ is larger when $w_{p-1}^{(1)}$ and $w_{p-1}^{(2)}$
  in~\eqref{uncHDsuf} have the same sign.  Hence, we can assume that
  $z_{p-1}(\bar{t}_{r,p-1})$ and $z_{p}(\tau)$ for $\tau \in
  [\bar{t}_{r,p-1},t_{c,p-1}]$ and $\bar{t}_{r,p-1} \in
  [t_{c,p-1}-\gamma,t_{c,p-1}]$ are positive. Define
  \begin{multline*}
    W_{p-1}=\{w_{p-1}=w_{p-1}^{(1)}+w_{p-1}^{(2)}:
    \\
    w_{p-1}^{(1)}=\pm
    v_{p-1}(\bar{t}_{r,p-1})e^{A(t_{c,p-1}-\bar{t}_{r,p-1})},
    \\
    w_{p-1}^{(2)}=\int_{\bar{t}_{r,p-1}}^{t_{c,p-1}}
    e^{\lambda(t_{c,p-1}-\tau)}z_p(\tau)d\tau;
    \\
    \bar{t}_{r,p-1} \in [t_{c,p}-\gamma,t_{c,p-1}],
    \\
    |z_{p}(\tau)|\le v_{0,p} e^{(\sigma+\lambda)\gamma}
    e^{-\sigma\tau} ~ \text{for} ~ \tau \in
    [\bar{t}_{r,p-1},t_{c,p-1}]\}.
  \end{multline*}
  Clearly, we have
  \begin{align}
    m(\Omega(z(t_{c,p-1})| t_{c,p-1})) \le m(W_{p-1}).
    \label{by}
  \end{align}
  Hence, a sufficient condition for $W_{p-1}$ will also be a
  sufficient condition for $\Omega(z(t_{c,p-1})| t_{c,p-1})$.
  We note that $W_{p-1}$ is the Brunn-Minkowski sum of the following
  sets
  \begin{align*}
    W_{p-1}^{(1)}&=\{w_{p-1}^{(1)}:w_{p-1}^{(1)}=\pm v_{p-1}(\bar{t}_{r,p-1})e^{A(t_{c,p-1}-\bar{t}_{r,p-1})},\\
    &~~~~~~~~~~~~~~~~~~~~~~~~~~~~~~~\bar{t}_{r,p-1} \in [t_{c,p}-\gamma,t_{c,p-1}]\}\\
    W_{p-1}^{(2)}&=\{w_{p-1}^{(2)}:w_{p-1}^{(2)}=\int_{\bar{t}_{r,p-1}}^{t_{c,p-1}}
    e^{\lambda(t_{c,p-1}-\tau)}z_p(\tau)d\tau;
    \\
    &~~~|z_{p}(\tau)|\le v_{0,p} e^{(\sigma+\lambda)\gamma}
    e^{-\sigma\tau} ~ \text{for} ~ \tau \in
    [\bar{t}_{r,p-1},t_{c,p-1}],
    \\
    &~~~~~~~~~~~~~~~~~~~~~~~~~~~~~~~\bar{t}_{r,p-1} \in
    [t_{c,p}-\gamma,t_{c,p-1}] \}.
  \end{align*}
  By the Brunn-Minkowski inequality \cite{gardner2002brunn}, we have
  \begin{align}
    \label{bmi}
    m(W_{p-1})\ge m(W_{p-1}^{(1)})+m(W_{p-1}^{(2)}).
  \end{align}
  The operators in the definition of $W_{p-1}^{(1)}$ and
  $W_{p-1}^{(2)}$ are continuous and the operator in the definition of
  $W_{p-1}^{(2)}$ is integral.  Hence, even if during the time
  interval $[\bar{t}_{r,p-1},t_{c,p-1}]$ the value of $z_p(\tau)$
  jumps according to~\eqref{eq:jumpst}, $W_{p-1}^{(2)}$ remains a
  connected compact set. Therefore, $W_{p-1}^{(1)}$ and
  $W_{p-1}^{(2)}$ are closed intervals that are translation and
  dilation of each other. In this case, the inequality (\ref{bmi}) is
  tight~\cite{klain2011equality}, and by \eqref{by} we have
  \begin{align}\label{BRWE}
    m(\Omega(z(t_{c,p-1})| t_{c,p-1})) \le m(W_{p-1}^{(1)})+m(W_{p-1}^{(2)}).
  \end{align}
  This allows us to deal with each coordinate, $p-1$ and $p$,
  separately as follows.  If there is no coupling in the differential
  equation that governs $z_{p-1}(t)$, we have
  \begin{align*}
    \dot{z}_{p-1}(t)=\lambda z_{p-1}(t).
  \end{align*}
  Using Theorem ~\ref{thm:suf-cond-ET}, and equation~\eqref{BRWE}    with the rate
  \begin{small}
    \begin{align}\label{DSDJk}
      &R_{s,p-1} \ge 
      \\
      &\quad \frac{\lambda+\sigma}{-\ln(\rho_{p-1} e^{-\sigma \gamma})} \max\Big\{0,1+\log\frac{b\gamma (\lambda+\sigma)}{\ln(1+\rho_{p-1} e^{- (\sigma+\lambda) \gamma})}\Big\}, \nonumber
    \end{align}
  \end{small}
  we can ensure 
  \begin{align}\label{GGHHDD}
    \Upsilon_{t_{c,p-1}^+}^{c}
    \le \rho_{p-1} v_{p-1}(t_{c,p-1})+m(W_{p-1}^{(2)}),
  \end{align}
  where $\Upsilon_{t_{c,p-1}^+}^{c}$ is the uncertainty set for
  $z_{p-1}(t_{c,p-1}^+)$ at the controller.

  We now find an upper bound for $m(W_{p-1}^{(2)})$ as follows. Since,
  $R_{s,p}$ is lower bounded as~\eqref{Sufi}, we can ensure $|z_p(t)|
  \le v_{0,p} e^{(\sigma+\lambda)\gamma} e^{-\sigma t}$, and
  \begin{align}\label{5566A}\nonumber
    m(W_{p-1}^{(2)})&=\int_{t_{c,p-1}-\gamma}^{t_{c,p-1}} e^{\lambda(t_{c,p-1}-\tau)}z_p(\tau)d\tau\\\nonumber
    &\le v_{0,p} e^{(\sigma+\lambda)\gamma} e^{\lambda t_{c,p-1}}\int_{t_{c,p-1}-\gamma}^{t_{c,p-1}} e^{-(\lambda+\sigma)\tau} d\tau \\
    &=\frac{v_{0,p} e^{(\sigma+\lambda)\gamma} e^{-\sigma t_{c,p-1}}}{\lambda+\sigma}(e^{(\lambda+\sigma)\gamma}-1).
  \end{align}
  From~\eqref{GRRMARTIN}, we have 
  \begin{align*}
    v_{0,p} \le
    \frac{v_{0,p-1}(\lambda+\sigma)(\rho_0-\rho_{p-1})}{e^{(\lambda+\sigma)\gamma}(e^{(\lambda+\sigma)\gamma}-1)}.
  \end{align*}
  Hence,
  \begin{small}
    \begin{align*}
      \frac{v_{0,p} e^{(\sigma+\lambda)\gamma}e^{-\sigma t_{c,p-1}}}{\lambda+\sigma}(e^{(\lambda+\sigma)\gamma}-1) &\le (\rho_0-\rho_{p-1}) v_{0,p-1}e^{-\sigma t_{c,p-1}}\\
      &=(\rho_0-\rho_{p-1})v_{p-1}(t_{c,p-1}).
    \end{align*}
  \end{small}
  Consequently, from~\eqref{5566A}
  we have 
  \begin{align}\label{uppercoup}
    m(W_{p-1}^{(2)}) \le (\rho_0-\rho_{p-1})v_{p-1}(t).
  \end{align}
  Therefore, using~\eqref{GGHHDD} and~\eqref{uppercoup} we have
  $m(\Upsilon_{t_{c,p-1}^+}^{c}) \le \rho_0v_{p-1}(t_{c,p-1})$ and
  $ |z_{p-1}(t_c^+)| \le \rho_0v_{p-1}(t_{c,p-1})$.  When $R_{s,p}$ is
  lower bounded as~\eqref{Sufi} and $R_{s,p-1}$ is lower bounded
  as~\eqref{DSDJk}, we can ensure
  \begin{align*}
    |z_{p-1}(t)| \le ((\rho_0-\rho_{p-1})+e^{(\lambda+\sigma)\gamma})v_{p-1}(t_{c,p-1})
  \end{align*}
  because the solution of the differential equation that governs
  $z_{p-1}$ is given in~\eqref{Solsuf}, and using~\eqref{uppercoup} we
  have
  \begin{align*}
    &|z(t_{c,p-1})| \le \\ 
    & v_{p-1}(t_{c,p-1}-\gamma)e^{\lambda\gamma}+(\rho_0-\rho_{p-1})v_{p-1}(t_{c,p-1}) \\ 
    &=((\rho_0-\rho_{p-1})+e^{(\lambda+\sigma)\gamma})v_{p-1}(t_{c,p-1}).
  \end{align*}
  With the same procedure we can find the sufficient rate $R_{s,i}$
  for $i=p-2,\ldots,1$, and this concludes the proof.
\end{proof}

\begin{remark}
  {\rm In a Jordan block of order $p_j$, the
    inequality~\eqref{GRRMARTIN} provides an upper bound on the value of the triggering function for coordinate $i$ using the value of the triggering function for coordinate $i-1$, where
    $i=2,\ldots,p_j$.  This is a natural consequence of the coupling
    among the coordinates in a Jordan block, cf.~\eqref{eq:errjordan},
    which makes the error in coordinate $i$ affect the error in
    coordinates $1$ to $i-1$, for each $i=2,\ldots,p_j$.}  \oprocend
\end{remark}

Corollary~\ref{Stability} can be generalized, provided $(A,B)$ is
stabilizable, using a linear control $u(t)=-K\hat{x}(t)$ with $A-BK$
Hurwitz. This is a consequence of Theorem~\ref{thm:Sufvector} which
guarantees that, using the stated communication rate, the state
estimation error for each coordinate converges to zero exponentially
fast.

\red{
\begin{remark}
  {\rm In our discussion,  we have assumed that $\hat{x}(t)$ is known to both controller and sensor. Since the sensor has access to the state, using the system dynamics, it can deduce $u(t)$,
and then obtain $\hat{x}(t)$, cf.~\cite{sahai2006necessity}. 
Note that the controller design for our
sufficient condition is linear $u(t)=-K\hat{x}(t)$, and thus the sensor  can deduce $\hat{x}(t)$ assuming that $BK$ is \green{invertible. Alternatively, the controller can directly signal the acknowledgment of the reception of the packet (and as a result $t_c^k$) to the sensor by applying a control input to the system that excites a specific frequency of the state each time a symbol has been received, and the sensor can construct $\hat{x}(t)$ at all time $t$ if it knows the decoding rule at the controller.}
On the other hand, assuming knowledge of $\hat{x}(t)$ at the sensor does not affect the generality of the necessary condition.}  \oprocend
\end{remark}}

\section{Conclusions}\label{sec:conc}
We have studied event-triggered control strategies for stabilization
and exponential observability of linear plants in the presence of
unknown bounded delay in the communication channel between the sensor
and the controller. Our study has been centered on quantifying the value of the timing
information implicit in the triggering events. We have identified a necessary and a sufficient condition on the transmission rate required to guarantee stabilizability and observability of the system for a given event triggering strategy. Our results reveal a phase transition behavior as a function of the maximum delay in the communication channel, where for small delays, a positive transmission rate ensures the control objective is met, while for large delays, the necessary transmission rate is larger than that of classical data-rate theorems with periodic communication and no delay. Future research will consider disturbances to the plant dynamics, additional errors in the communication channel not caused by quantization, \green{extensions    to the case when the communication delay is a function of  thepacket size, replacing the Assumption $1$ with packet size constraints,} and the study of other event-triggering strategies.

\section*{Acknowledgements}
This research was partially supported by NSF award CNS-1446891.

\bibliography{mybib} 
\bibliographystyle{IEEEtran}

\vspace*{-2ex}

\begin{IEEEbiography}
[{\includegraphics[height=1.25in,clip]{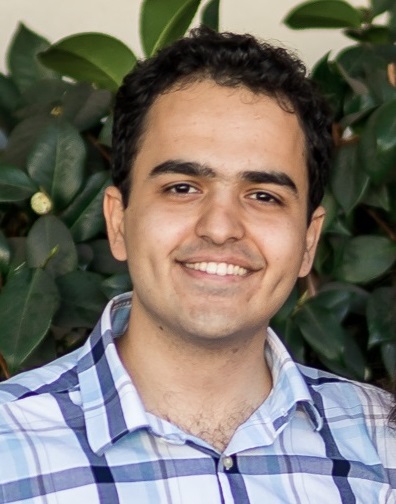}}]{Mohammad
    Javad Khojasteh}(S'14) did his undergraduate study at the Sharif University of Technology. He received two B.Sc. degrees in electrical engineering and pure mathematics in 2015.
    He received the
  M.Sc. degree in electrical and computer engineering at the
  University of California San Diego (UCSD), La Jolla, CA, in
  2017. Currently, he is pursuing a Ph.D. degree in electrical and
  computer engineering at UCSD under the advice of prof. Franceschetti. His research interests include networked control systems, machine learning, and robotics.
\end{IEEEbiography}
\begin{IEEEbiography}[{\includegraphics[height=1.25in,clip]
    {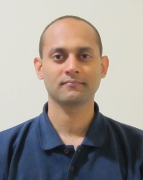}}]{Pavankumar Tallapragada}
  received the B.E. degree in Instrumentation Engineering from SGGS
  Institute of Engineering $\&$ Technology, Nanded, India in 2005,
  M.Sc. (Engg.) degree in Instrumentation from the Indian Institute of
  Science, Bangalore, India in 2007 and the Ph.D. degree in Mechanical
  Engineering from the University of Maryland, College Park in
  2013. He held a postdoctoral position at the University of
  California, San Diego during 2014 to 2017. He is currently an
  Assistant Professor in the Department of Electrical Engineering at
  the Indian Institute of Science, Bengaluru, India. His research
  interests include event-triggered control, networked control
  systems, distributed control and networked transportation systems.
\end{IEEEbiography}
\begin{IEEEbiography}
  [{\includegraphics[height=1.25in,clip]{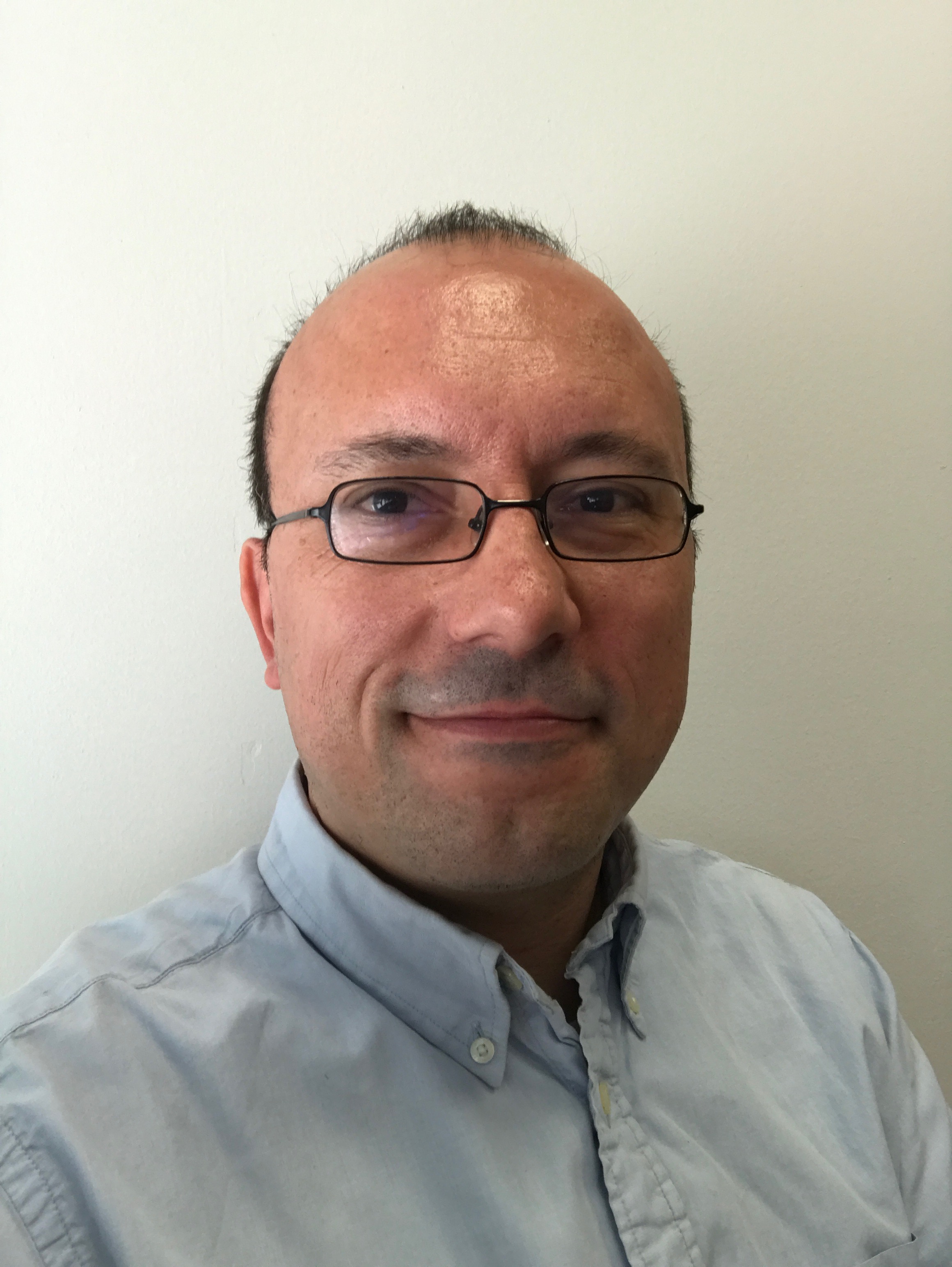}}]
  {Jorge Cort\'es} (M'02-SM'06-F'14) received the Licenciatura degree
  in mathematics from Universidad de Zaragoza, Zaragoza, Spain, in
  1997, and the Ph.D. degree in engineering mathematics from
  Universidad Carlos III de Madrid, Madrid, Spain, in 2001. He held
  postdoctoral positions with the University of Twente, Twente, The
  Netherlands, and the University of Illinois at Urbana-Champaign,
  Urbana, IL, USA. He was an Assistant Professor with the Department
  of Applied Mathematics and Statistics, University of California,
  Santa Cruz, CA, USA, from 2004 to 2007. He is currently a Professor
  in the Department of Mechanical and Aerospace Engineering,
  University of California, San Diego, CA, USA. He is the author of
  Geometric, Control and Numerical Aspects of Nonholonomic Systems
  (Springer-Verlag, 2002) and co-author (together with F. Bullo and
  S. Mart{\'\i}nez) of Distributed Control of Robotic Networks
  (Princeton University Press, 2009). He has been an IEEE Control
  Systems Society Distinguished Lecturer (2010-2014) and is an elected
  member for 2018-2020 of the Board of Governors of the IEEE Control
  Systems Society. His current research interests include distributed
  control and optimization, network science, opportunistic
  state-triggered control and coordination, reasoning under
  uncertainty, and distributed decision making in power networks,
  robotics, and transportation.
\end{IEEEbiography}
\begin{IEEEbiography}
  [{\includegraphics[height=1.25in,clip]{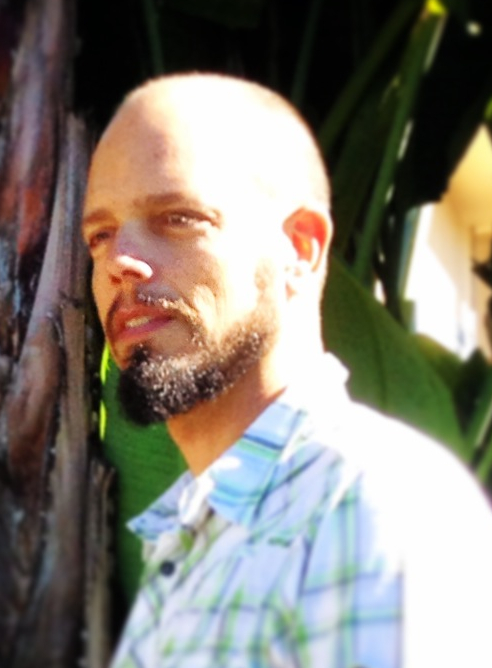}}]
  {Massimo Franceschetti} (M'98-SM'11-F'18) received the Laurea degree
  (with highest honors) in computer engineering from the University of
  Naples, Naples, Italy, in 1997, the M.S. and Ph.D. degrees in
  electrical engineering from the California Institute of Technology,
  Pasadena, CA, in 1999, and 2003, respectively.  He is Professor of
  Electrical and Computer Engineering at the University of California
  at San Diego (UCSD). Before joining UCSD, he was a postdoctoral
  scholar at the University of California at Berkeley for two
  years. He has held visiting positions at the Vrije Universiteit
  Amsterdam, the \'{E}cole Polytechnique F\'{e}d\'{e}rale de Lausanne,
  and the University of Trento. His research interests are in physical
  and information-based foundations of communication and control
  systems. He is a co-author of the book ``Random Networks for
  Communication'' and author of ``Wave Theory of Information,'' both published by Cambridge University Press.
  Dr. Franceschetti served as Associate Editor for Communication
  Networks of the IEEE Transactions on Information Theory (2009-2012), as Associate Editor of the IEEE Transactions on Control of
  Network Systems (2013-2016), as Associate Editor for the   IEEE Transactions on
  Network Science and Engineering (2014-2017), and as Guest Associate Editor of the IEEE
  Journal on Selected Areas in Communications (2008, 2009). He also served as general chair for the North American School of Information Theory (2015). He was awarded the C. H. Wilts
  Prize in 2003 for best doctoral thesis in electrical engineering at
  Caltech; the S.A.  Schelkunoff Award in 2005 for best paper in the
  IEEE Transactions on Antennas and Propagation, a National Science
  Foundation (NSF) CAREER award in 2006, an Office of Naval Research
  (ONR) Young Investigator Award in 2007, the IEEE Communications
  Society Best Tutorial Paper Award in 2010, and the IEEE Control
  theory society Ruberti young researcher award in 2012. He has been elected fellow of the IEEE in 2018, and he was nominated a Guggenheim Fellow for natural sciences, engineering, in 2019.
\end{IEEEbiography}
\end{document}